\definecolor{mygreen}{rgb}{0.1, 0.76, 0.24}
\setlist{leftmargin=*,itemsep=0.25\itemsep,parsep=0.35\parsep,topsep=0.25\topsep,partopsep=0.29\partopsep}
\newtheorem{theorem}{Theorem}
\newtheorem{other}{Theorem}
\newtheorem{lemma}[other]{Lemma}
\newtheorem{proposition}[other]{Proposition}
\newtheorem*{remark}{Remarks}
\renewenvironment{proof}[1][\proofname]{ { \it\bfseries #1: }}{\qed}
\newcommand{\cF}{{\mathcal{F}}}
\newcommand{\cN}{{\mathcal{N}}}
\newcommand{\field}[1]{\mathbb{#1}}
\newcommand{\R}{\field{R}}
\newcommand{\E}{\field{E}}
\newcommand{\p}{\field{P}}
\newcommand{\mat}[2][rrrrrrrrrrrrrrrrrrrrrrrrrrrrrrrr]{\left[ \begin{array}{#1} #2 \\ \end{array}\right]}
\numberwithin{equation}{section}
\providecommand{\keywords}[1]
{
  \small	
  \textbf{\textit{Keywords: }} #1
}
\title{\bf  A product-CLT and its application in invariance principle of random projection}
\author{Juntao Duan\footnote{Corresponding author. Georgia Institute of Technology, School of Mathematics,
    \href{mailto:juntaoduan@gmail.com}{juntaoduan@gmail.com}},\quad  
    Ionel Popescu\footnote{ University of Bucharest, Mathematics and Computer Science, Institute of Mathematics of the Romanian Academy, \href{mailto:ioionel@gmail.com }{ioionel@gmail.com}}
    , \quad 
    Fan Zhou \footnote{Baidu Research, \href{mailto:fanzhou@baidu.com}{fanzhou@baidu.com}} }
\begin{document}
	
	\maketitle
	
	\microtypesetup{protrusion=true} 
	
\begin{abstract}
	 Johnson-Lindenstrauss lemma states random projections can be used as a topology preserving embedding technique for fixed vectors. In this paper, we try to understand how random projections affect probabilistic properties of random vectors. In particular we prove the distribution of inner product of two independent random vectors $X, Z \in \field{R}^n$ with i.i.d. entries is preserved by random projection $S:\field{R}^n \to \field{R}^m$. More precisely,
	 \[
	  \sup_t \left| \field{P}(\frac{1}{C_{m,n}} X^TS^TSZ <t) - \field{P}(\frac{1}{\sqrt{n}} X^TZ<t)  \right| \le O\left(\frac{1}{\sqrt{n}}+ \frac{1}{\sqrt{m}} \right) 
	 \]
	 
	 This is achieved by proving a general central limit theorem (product-CLT) for  $\sum_{k=1}^{n} X_k Y_k$, where $\{X_k\}$ is a martingale difference sequence, and  $\{Y_k\}$ has dependency within the sequence. We also obtain the rate of convergence in the spirit of Berry-Esseen theorem.
	 
\end{abstract}
\keywords{Johnson-Lindenstrauss lemma; random projection; Central limit theorem; dependent; invariance; inner product, Berry-Esseen; rate of convergence} 

\section{Introduction}

     Due to the internet boom and computer technology advancement in the last few decades, data collection and storage have been growing exponentially. With 'gold' mining demand on the enormous amount of data reaches to a new level, we are facing many technical challenges in understanding the information we have collected. In many different cases, including text and images, data can be represented as points or vectors in high dimensional space. On one hand, it is very easy to collect more and more information about the object so that the dimensionality grows quickly. On the other hand it is very difficult to analyze and create useful models for high dimensional data due to several reasons including computational difficulty as a result of curse of dimensionality and high noise to signal ratio. It is therefore necessary to reduce the dimensionality of data while preserving relevant structures. 
     
     The celebrated Johnson-Lindenstrauss lemma \cite{johnson1984extensions} states that random projections can be used as a general dimension reduction technique to embed topological structures in high dimensional Euclidean space into a low dimensional space without distorting its topology. 
     Since then random projections has been found very useful in many applications such as signal processing and machine learning.  For example fast Johnson-Lindenstrauss random projections is used to approximate K-nearest neighbors to speed up computation \cite{indyk1998approximate, ailon2009fast}.  Random sketching uses random projection to reduce sample sizes in regression model and low rank matrix approximation \cite{woodruff2014sketching}. Random projected features can be used to create low dimensional base classifiers which are combined as robust ensemble model \cite{cannings2017random}. Practitioners found applications of random projection in privacy and security \cite{liu2005random}.
  Before we begin to state our problem, let us state the Johnson Lindenstrauss lemma \cite{burr2018optimal}.
  
  \begin{lemma}[Johnson and Lindenstrauss]
       Given a set of vectors $\{u_1, \cdots, u_k\}$ in $\R^n$, for any $m \ge 8 \varepsilon^{-2} \log k$, there exists a linear map $A:\R^n \to \R^m$ such that 
       \[
       (1-\varepsilon)\|u_i -u_j\| \le 
       \|Au_i -Au_j\| \le (1+\varepsilon)\|u_i -u_j\|
       \]
  \end{lemma}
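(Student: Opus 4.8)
The plan is to use the probabilistic method: rather than construct $A$ explicitly, I would exhibit a \emph{random} linear map that satisfies all $\binom{k}{2}$ distortion constraints simultaneously with positive probability, so that at least one admissible map must exist. Concretely, I would take $A = \tfrac{1}{\sqrt{m}} G$, where $G$ is an $m \times n$ matrix with i.i.d.\ standard Gaussian entries. The first step is a single-vector concentration estimate: for a fixed $x \in \R^n$ one has $\E\|Ax\|^2 = \|x\|^2$, and since the coordinates of $Gx/\|x\|$ are i.i.d.\ standard normals, the ratio $\|Ax\|^2/\|x\|^2$ has the law of $\tfrac{1}{m}\chi^2_m$, a normalized chi-squared variable. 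This reduces the whole lemma to controlling the fluctuations of a chi-squared random variable around its mean.

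The second step is to bound both tails of this normalized chi-squared variable. Applying the Chernoff method to the moment generating function $\E e^{\lambda \chi^2_m} = (1-2\lambda)^{-m/2}$ and optimizing over $\lambda$, I would obtain
\[
\p\big(\|Ax\|^2 \ge (1+\varepsilon)\|x\|^2\big) \le e^{-\frac{m}{2}(\varepsilon - \log(1+\varepsilon))}, \qquad \p\big(\|Ax\|^2 \le (1-\varepsilon)\|x\|^2\big) \le e^{-\frac{m}{2}(-\varepsilon - \log(1-\varepsilon))}.
\]
Using elementary second-order bounds on $\log(1\pm\varepsilon)$, namely $\varepsilon - \log(1+\varepsilon) \ge (\varepsilon^2-\varepsilon^3)/2$ and $-\varepsilon - \log(1-\varepsilon) \ge \varepsilon^2/2$, each tail is at most $\exp\!\big(-m(\varepsilon^2-\varepsilon^3)/4\big)$, so that
\[
\p\Big(\big|\,\|Ax\|^2 - \|x\|^2\,\big| > \varepsilon\|x\|^2\Big) \le 2\,e^{-m(\varepsilon^2-\varepsilon^3)/4}.
\]

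The third step applies this bound to each difference vector $x = u_i - u_j$ and takes a union bound over all pairs. The probability that \emph{some} pair fails the two-sided estimate is at most $\binom{k}{2}\cdot 2 e^{-m(\varepsilon^2-\varepsilon^3)/4} \le k^2 e^{-m(\varepsilon^2-\varepsilon^3)/4}$. Choosing $m \ge 8\varepsilon^{-2}\log k$ drives this quantity strictly below $1$ (for $\varepsilon$ small enough that the cubic correction is absorbed into the $(1-\varepsilon)$ factor), so with positive probability $A$ preserves every squared pairwise distance within the band $[(1-\varepsilon)\|u_i-u_j\|^2,\,(1+\varepsilon)\|u_i-u_j\|^2]$; taking square roots and rescaling $\varepsilon$ by a harmless constant yields the stated inequalities, and in fact shows the conclusion holds for the random $A$ with high probability, not merely for some $A$.

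The step I expect to be the main obstacle is pinning the constant in the exponent down sharply enough to reach the advertised threshold $8\varepsilon^{-2}\log k$: this rests entirely on the precise chi-squared tail estimate together with the second-order Taylor bounds on $\log(1\pm\varepsilon)$, and is exactly where the factor $8$ is forced. Everything else — linearity of expectation, the rotational invariance that produces the chi-squared law, and the union bound — is routine bookkeeping.
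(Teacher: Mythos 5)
The paper itself offers no proof of this lemma: it is quoted as background, with a citation to the literature, so there is no internal argument to compare against. Your route --- a Gaussian random map $A=\tfrac{1}{\sqrt m}G$, reduction via rotational invariance to concentration of $\tfrac1m\chi^2_m$, Chernoff bounds on both tails, and a union bound over the $\binom{k}{2}$ pairs --- is the canonical probabilistic-method proof (Dasgupta--Gupta style), and your first two steps are correct as stated, including the moment generating function $(1-2\lambda)^{-m/2}$ and the second-order bounds $\varepsilon-\log(1+\varepsilon)\ge(\varepsilon^2-\varepsilon^3)/2$ and $-\varepsilon-\log(1-\varepsilon)\ge\varepsilon^2/2$.

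The gap is in the final bookkeeping, exactly where you predicted trouble, and as written the last step does not close. With your tail bound, at the threshold $m=8\varepsilon^{-2}\log k$ the exponent is $m(\varepsilon^2-\varepsilon^3)/4=2(1-\varepsilon)\log k$, so the union bound gives $k^2e^{-2(1-\varepsilon)\log k}=k^{2\varepsilon}\ge 1$: the failure probability is never driven below $1$, for any $k\ge 2$ and $\varepsilon>0$, and ``rescaling $\varepsilon$ by a harmless constant'' is not available because the constant $8$ is part of the statement being proved. The fix is to notice that your square-root step wastes a factor rather than costing one: since the conclusion concerns distances, you should aim the Chernoff bounds directly at the event $\|Ax\|^2\notin[(1-\varepsilon)^2,(1+\varepsilon)^2]\,\|x\|^2$, which is strictly weaker than the squared band $[1-\varepsilon,1+\varepsilon]$ you control (indeed $\sqrt{1-\varepsilon}\ge 1-\varepsilon$ and $\sqrt{1+\varepsilon}\le 1+\varepsilon$, so square-rooting only gains slack). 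The half-width of the correct squared band is $2\varepsilon\mp\varepsilon^2$, so the effective deviation parameter in your tail estimates doubles and the leading exponent quadruples to roughly $m\varepsilon^2\bigl(1-O(\varepsilon)\bigr)\ge 8\bigl(1-O(\varepsilon)\bigr)\log k$, comfortably beating the $2\log k$ cost of the union bound: for, say, $\varepsilon\le\tfrac14$ the failure probability is at most $2k^{-2}<1$ for $k\ge 2$. With that single redirection your argument closes at the stated threshold, and indeed yields the conclusion with high probability rather than merely positive probability.
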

  
  Given two fixed vectors $X, Z \in \R^n$,  by Johnson-Lindenstrauss lemma,  we can find a random projections $A: \R^n \to \R^m$ such that the projected distance $\|AX-AZ \|$ has only a small distortion of the original distance $\|X-Z \|$. More precisely, 
\begin{align}\label{eqn:J-L lemma norm error order}
    \left[1- O(\frac{1}{\sqrt{m}}) \right] \|X-Z\|^2 \le \| A(X-Z)\|^2 \le \left[1+ O(\frac{1}{\sqrt{m}}) \right] \|X-Z\|^2
\end{align}
Equivalently, this property can be reformulated as random projections preserves the inner product of two vectors (Equivalence can be obtained by elementary computation and polarization identity). 
Namely given $X, Z$ two vectors in the unit ball of $\R^n$ ($\|X\|\le 1, \|Z\|\le 1$) , then there is a random projection $A:\R^n \to \R^m$ such that 
\begin{align}\label{eqn:J-L lemma inner product}
    | \langle AX, AZ \rangle - \langle X, Z \rangle | \le O(\frac{1}{\sqrt{m}})
\end{align}
For general vectors not in the unit ball, the bound on the right hand side has the norms as a factor \[
| \langle AX, AZ \rangle - \langle X, Z \rangle | \le O(\frac{1}{\sqrt{m}})\|X\| \|Z\|\le O(\frac{1}{\sqrt{m}})(\|X\|^2+ \|Z\|^2) 
\]

The natural extension is to consider random vectors $X, Z$. Then we may ask what random projections do to random vectors? Is there an invariance phenomenon in the distribution sense?  Closeness in distribution usually boils down to the difference of the cumulative distribution function. If we look at inner product, then we will be interested in controlling 
\[
\sup_t \left| \p\left( \langle AX, AZ \rangle < t \right) - \p\left( \langle X, Z \rangle < t \right) \right|
\]
  
  In this work we try to address some of these question. In particular, how distributions of randomly projected random vectors changes. We obtain an invariance principle for independent random vectors very similar to the inner product form of Johnson-Lindenstrauss lemma but extended to the distribution sense. Our contributions in this paper includes:
   \begin{enumerate}
       \item We proved random projections preserves distribution of inner product of independent random vectors with i.i.d. entries.  Roughly speaking, two orthogonal random vectors in high dimension remains orthogonal in the randomly projected lower dimensional space.
       
       \item We also quantitatively characterize the distortion of  distribution introduced by random projection. The error term has a bound at most $O(\frac{1}{\sqrt{m}} + \frac{1}{\sqrt{n}})$. For $m\le n$, this shows the error term is the same order as in Johnson-Lindenstrauss lemma.
       
       
       \item A central limit theorem is established for random variables with dependence structure. At the same time, we obtained its Berry-Esseen type rate of convergence. This alone can be of great interests in many applications involving dependence  structure.
   \end{enumerate}

   The rest of the paper is structured as follows. We first state the main theorems in  \cref{sec:theorems}. Then we prove product-CLT in  \cref{sec: CLT theorem} and obtain the rate of convergence in  \cref{sec: CLT rate theorem}. Along the way, we will discuss some equivalent conditions for product-CLT theorems.  In  \cref{sec:random projection}, we prove the theorems concern invariance principle of random projections.

\subsection{Brief review of CLT for dependent random variables}\label{sec:review CLT}
   Central limit theorem plays an important role in probability and has many real world applications. One pitfall in the classical theory is that we can only deal with independent random variables.  There are many attempts to extend the theory to handle dependent random variables. Hoeffding and Robbins \cite{hoeffding1948} formulated one of the early result which shows CLT still holds for locally dependent sequence. One of the most interesting development is the martingale difference central limit theorem in \cite{brown1971}. In a nutshell, if the conditional variance converges in probability, then a Lindeberg condition implies  CLT for the sequence. 
   	\begin{theorem}[Martingale CLT]\label{thm:martingale CLT}
		Let $\{x_k\}$ be a sequence of martingale differences, $\{\cF_k \}$ be the natural filtration, Let $\E x_k^2=1$, denote $\E[x_k^2|\cF_{k-1}]:=\sigma_k^2$. If the following two conditions hold 
		\begin{enumerate}
		    \item $\frac{1}{n}\sum_k \sigma_k^2 \xrightarrow{\; p \;} 1 $
		    \item Lindeberg condition: $\frac{1}{n}\sum_k \E x_k^2 I(|x_k|>\varepsilon \sqrt{n}) \to 0$ for all $\varepsilon>0$.
		\end{enumerate}
	Then 
	\begin{equation*}
	    \frac{1}{\sqrt{n}} \sum_{k=1}^n x_k \xrightarrow{\;  \;} \cN(0,1)
	\end{equation*}
	\end{theorem}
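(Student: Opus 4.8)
The plan is to prove convergence of characteristic functions and invoke L\'evy's continuity theorem: it suffices to show that $\E\exp(itS_n/\sqrt{n}) \to \exp(-t^2/2)$ for every fixed $t\in\field{R}$, where $S_n=\sum_{k=1}^n x_k$. I would organize everything around the triangular array $X_{n,k}:=x_k/\sqrt{n}$, $1\le k\le n$, which is again a martingale difference array with respect to $\{\cF_k\}$ and whose row sum is $S_n/\sqrt{n}$. The engine of the argument is the McLeish-type factorization of the complex exponential: for each increment write
\[
e^{itX_{n,k}} = (1+itX_{n,k})\,\exp\!\left(-\tfrac{t^2}{2}X_{n,k}^2 + r_{n,k}\right),
\]
where $r_{n,k}$ is a remainder that is cubic in $X_{n,k}$ for small increments. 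Multiplying over $k$ yields
\[
e^{itS_n/\sqrt{n}} = T_n\,\exp\!\left(-\tfrac{t^2}{2}\sum_{k=1}^n X_{n,k}^2 + \sum_{k=1}^n r_{n,k}\right), \qquad T_n := \prod_{k=1}^n(1+itX_{n,k}).
\]
The key structural fact is that $T_n$ is a mean-one martingale: since $\E[X_{n,k}\mid\cF_{k-1}]=0$, one has $\E[1+itX_{n,k}\mid\cF_{k-1}]=1$, and hence $\E T_n=1$ for every $n$.

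Next I would convert the two hypotheses into the two quantities appearing in the factorization. First, the Lindeberg condition (2) yields uniform asymptotic negligibility $\max_{k\le n}|X_{n,k}|\xrightarrow{p}0$, and simultaneously forces the aggregate remainder $\sum_k r_{n,k}$ to vanish in probability, because each $|r_{n,k}|$ is controlled by $|X_{n,k}|^3$ on the bulk and by $X_{n,k}^2$ on the Lindeberg exceptional set $\{|x_k|>\varepsilon\sqrt{n}\}$. Second, I need the quadratic-variation convergence $\sum_k X_{n,k}^2 = \frac1n\sum_k x_k^2 \xrightarrow{p} 1$. This does \emph{not} follow from condition (1) directly, since (1) concerns the conditional variances $\sigma_k^2=\E[x_k^2\mid\cF_{k-1}]$; I would bridge the gap with a weak law for the martingale difference sequence $x_k^2-\sigma_k^2$, showing $\frac1n\sum_k(x_k^2-\sigma_k^2)\xrightarrow{p}0$ via the Lindeberg truncation to control its fluctuations, and then add condition (1).

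With both ingredients in hand, the exponential factor in the factorization converges in probability to the constant $\exp(-t^2/2)$, so that $e^{itS_n/\sqrt{n}}\approx T_n\,e^{-t^2/2}$ up to an error that is small in probability. Taking expectations and using $\E T_n=1$ should give $\E e^{itS_n/\sqrt{n}}\to e^{-t^2/2}$, which is exactly the claim.

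The main obstacle is the final interchange of limit and expectation: convergence in probability of the exponential factor together with $|e^{itS_n/\sqrt{n}}|=1$ is not enough, because $T_n$ itself is unbounded. The crux is therefore establishing uniform integrability of $\{T_n\}$, for which I would use $|T_n|^2=\prod_k(1+t^2X_{n,k}^2)\le \exp\!\big(t^2\sum_k X_{n,k}^2\big)$ and combine the quadratic-variation control with the negligibility of $\max_k|X_{n,k}|$ to show $\{T_n\}$ is $L^2$-bounded, or at least uniformly integrable, after a preliminary truncation of the array that leaves the row sum asymptotically unchanged. Once uniform integrability is secured, a dominated-type passage to the limit closes the argument; I expect this step, together with the careful accounting of $\sum_k r_{n,k}$ on the Lindeberg exceptional set, to absorb most of the technical effort.
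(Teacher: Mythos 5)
The paper offers no proof of this statement at all: it is quoted as background from Brown \cite{brown1971}, so the only meaningful comparison is with the classical literature. Your proposal is, in substance, the McLeish characteristic-function proof of the martingale CLT (the route by which this theorem is standardly established), and its skeleton is correct: $T_n=\prod_k(1+itX_{n,k})$ has mean one because $\E[X_{n,k}\mid\cF_{k-1}]=0$; the Lindeberg condition does give $\max_k|X_{n,k}|\xrightarrow{\;p\;}0$ and does annihilate $\sum_k r_{n,k}$; and you correctly spotted the step most sketches gloss over, namely that hypothesis (1) concerns the conditional variances $\sigma_k^2$ while the factorization needs the quadratic variation $\frac1n\sum_k x_k^2\xrightarrow{\;p\;}1$, so a separate weak law for the $L^1$ martingale differences $x_k^2-\sigma_k^2$ is genuinely required (proved, as you indicate, by truncating at level $\varepsilon^2 n$, with the Lindeberg condition controlling both the truncation error and, via orthogonality, the variance of the truncated centered sum). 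This bridge is exactly Brown's Lemma, and your overall architecture matches his and McLeish's arguments.

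Two cautions on your final step, which is where you correctly located the difficulty. First, the literal bound $|T_n|^2\le\exp\bigl(t^2\sum_k X_{n,k}^2\bigr)$ cannot by itself deliver uniform integrability: convergence in probability of $\sum_k X_{n,k}^2$ provides no exponential moment, so your parenthetical ``preliminary truncation'' carries all the weight. The standard implementation is to stop the array at $\tau_n=\min\{k:\sum_{j\le k}X_{n,j}^2>2\}$; this alters the row sum only on the event $\{\sum_k X_{n,k}^2>2\}$, whose probability vanishes, and the stopped product satisfies
\begin{equation*}
|T_n|^2\;\le\; e^{2t^2}\bigl(1+t^2\max_k X_{n,k}^2\bigr),
\qquad
\E\max_k X_{n,k}^2\;\le\;\varepsilon^2+\frac1n\sum_k\E\, x_k^2\, I(|x_k|>\varepsilon\sqrt n),
\end{equation*}
so the stopped $T_n$ is $L^2$-bounded, hence uniformly integrable. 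Second, in the closing limit interchange it is cleaner to write $\E e^{itS_n/\sqrt n}-e^{-t^2/2}=\E\bigl[T_n\bigl(e^{W_n}-e^{-t^2/2}\bigr)\bigr]$ with $W_n=-\frac{t^2}{2}\sum_k X_{n,k}^2+\sum_k r_{n,k}$, and to exploit that $|T_n e^{W_n}|=|e^{itS_n/\sqrt n}|=1$ identically, so that the product $T_n e^{W_n}$ is bounded and only $T_n$ times the constant needs the uniform integrability just established; without this observation the unboundedness of $e^{W_n}$ alone would stall the argument. With these two repairs --- both of which you anticipated at the level of strategy, if not of detail --- the proposal is a complete and correct proof of the cited theorem.
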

   
   The exact rate of convergence is obtained by  \cite{bolthausen1982}: with uniformly boundedness condition, the rate of convergence is shown as $O(\frac{\log n}{\sqrt{n}})$. Slightly more general results can be found in \cite{haeusler1988} and \cite{mourrat2013}. There is another line of research considering mixing weak dependence which is extensively discussed in \cite{Bradley2007} and \cite{Dedecker2007}. A mixing condition requires dependence between random variables in the sequence decays as their positions are further apart. Essentially, far apart random variables become almost independent.



\section{Main theorems}\label{sec:theorems}

	\begin{theorem}[product-CLT]\label{thm:product_CLT}
		Given random variables $\{X_k\}$ such that  $\E X_k = 0 $ and $\E X_k^2 = 1$. Given another sequence of random variables $\{Y_k\}$. Assume $\{Y_k\}$ are independent with  $\{X_k\}$  ($Y_k$ and $Y_{k'}$ could be dependent). Assume all third moments exist and bounded, namely there is fixed large number $A$
		\begin{equation}\label{eqn:third order condition}
		    \E [|X_k|^3]<A<\infty, \quad \E [|Y_k|^3]<A<\infty, \quad \forall k
		\end{equation}
        Further assume 
	\begin{equation}\label{eqn:conditional moments}
		    \E[ X_k| \cF_{k-1}]=0, \quad  \E[ X_k^2| \cF_{k-1}]=1
		\end{equation}
		 where $\cF_k$ is the filtration generated by the (martingale difference) sequence $\{X_k\}$. \\
		Assume $\{Y_k\}$  satisfies
		\begin{equation}\label{eqn:LLN Y_k^2} 
		\frac{1}{n}\sum_{k=1}^n Y_k^2 \xrightarrow{\; p \;} 1
		\end{equation}
		Then we have the following CLT
		\begin{equation*}
		  \frac{1}{\sqrt{n}} \sum_{k=1}^n X_k Y_k \to G  
		\end{equation*}
		where $G$ is the standard Gaussian random variable.
	\end{theorem}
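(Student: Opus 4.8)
The plan is to condition on the entire sequence $\{Y_k\}$ and thereby reduce the statement to the martingale central limit theorem recalled in \cref{thm:martingale CLT}. Since $\{Y_k\}$ is independent of the martingale difference sequence $\{X_k\}$, I would enlarge the filtration by setting $\cG_k := \sigma\!\left( \{Y_j\}_{j\ge 1} \cup \{X_i\}_{i \le k}\right)$, so that the whole $Y$-sequence is known from the start while the $X$'s are revealed one at a time. The point of this construction is that each $Y_k$ is $\cG_{k-1}$-measurable, so the products $X_k Y_k$ should again form a martingale difference array, to which the martingale CLT can be applied with conditional variances governed entirely by the $Y$'s.

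The first step is to verify the martingale difference property for $\{X_k Y_k\}$ with respect to $\{\cG_k\}$. Because $X_k$ is independent of the entire $Y$-process, adjoining the $Y$'s to the filtration does not change the conditional law of $X_k$ given the past, so $\E[X_k \mid \cG_{k-1}] = \E[X_k \mid \cF_{k-1}] = 0$ by \eqref{eqn:conditional moments}, and hence
\[
\E[X_k Y_k \mid \cG_{k-1}] = Y_k\, \E[X_k \mid \cG_{k-1}] = 0 .
\]
The same independence gives $\E[X_k^2 \mid \cG_{k-1}] = \E[X_k^2 \mid \cF_{k-1}] = 1$, so the conditional variance of the $k$-th term is
\[
\sigma_k^2 := \E\!\left[(X_k Y_k)^2 \mid \cG_{k-1}\right] = Y_k^2\, \E[X_k^2 \mid \cG_{k-1}] = Y_k^2 .
\]
Consequently $\frac1n \sum_{k=1}^n \sigma_k^2 = \frac1n\sum_{k=1}^n Y_k^2 \xrightarrow{p} 1$ is \emph{exactly} the hypothesis \eqref{eqn:LLN Y_k^2}, which disposes of the first condition of \cref{thm:martingale CLT}.

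The remaining work is to check the Lindeberg condition for the array $\{X_k Y_k\}$, and here the bounded third moments \eqref{eqn:third order condition} do the job through a standard truncation estimate. Using $\E[W^2\, I(|W|>a)] \le a^{-1}\E[|W|^3]$ with $W = X_k Y_k$ and $a = \varepsilon\sqrt n$, together with the independence of $X_k$ and $Y_k$, I would bound each summand by
\[
\E\!\left[ (X_k Y_k)^2\, I(|X_k Y_k| > \varepsilon\sqrt n)\right] \le \frac{1}{\varepsilon\sqrt n}\,\E[|X_k|^3]\,\E[|Y_k|^3] \le \frac{A^2}{\varepsilon\sqrt n},
\]
so that $\frac1n\sum_{k=1}^n \E[(X_kY_k)^2 I(|X_kY_k|>\varepsilon\sqrt n)] \le A^2/(\varepsilon\sqrt n) \to 0$ for every fixed $\varepsilon>0$, establishing the Lindeberg condition.

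I expect the only delicate point to be the normalization rather than either condition check. The version of the martingale CLT recalled above carries the per-term assumption $\E x_k^2 = 1$, whereas in our array $\E[(X_k Y_k)^2] = \E[Y_k^2]$ need not equal $1$ termwise — only the aggregate conditional variance converges, which is precisely condition (1). The point to keep in mind is that the conclusion is in fact driven by the pair (conditional-variance LLN, Lindeberg): when $\frac1n\sum_k \sigma_k^2 \xrightarrow{p} \eta^2$ for a constant $\eta^2$, the normalized sum converges to $\cN(0,\eta^2)$, and here $\eta^2 = 1$. I would therefore invoke \cref{thm:martingale CLT} in this conditional-variance form, with both of its conditions verified above, to conclude that $\frac{1}{\sqrt n}\sum_{k=1}^n X_k Y_k \to \cN(0,1)$, as claimed.
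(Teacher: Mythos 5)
Your proof is correct, but it takes a genuinely different route from the paper. You reduce the statement to the martingale CLT by enlarging the filtration to $\cG_k=\sigma(\{Y_j\}_{j\ge1}\cup\{X_i\}_{i\le k})$, so that $\{X_kY_k\}$ becomes a martingale difference sequence with conditional variances $Y_k^2$, making hypothesis \cref{eqn:LLN Y_k^2} literally the conditional-variance condition, and the third-moment bound delivers Lindeberg via $\E[W^2 I(|W|>a)]\le a^{-1}\E|W|^3$ with $\E|X_kY_k|^3=\E|X_k|^3\,\E|Y_k|^3\le A^2$. The paper instead runs a Lindeberg swap: it replaces the $X_k$ one at a time by i.i.d.\ Gaussians $Z_k$, matches the first two Taylor terms using exactly the conditional moment conditions \cref{eqn:conditional moments}, accrues an $O(n^{-1/2})$ error from the third-order remainders, and finishes by noting that $\sum Z_iY_i/\sqrt{\sum Y_i^2}$ is exactly standard normal, so Slutsky with \cref{eqn:LLN Y_k^2} concludes. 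You correctly flag the one delicate point in your route: \cref{thm:martingale CLT} as recalled in the paper assumes $\E x_k^2=1$ termwise, whereas $\E[(X_kY_k)^2]=\E Y_k^2$ need not be $1$ (indeed it need not even converge, and can vanish for some $k$, so per-term rescaling cannot repair this); you therefore genuinely need the conditional-variance form of the martingale CLT (Brown 1971, or Hall--Heyde Theorem 3.2 with nested filtrations, whose conditional Lindeberg condition follows from your expectation bound by Markov's inequality), which is standard but strictly stronger than the statement quoted in the paper. What each approach buys: yours is shorter and conceptually transparent (it exhibits the theorem as a conditioned martingale CLT), while the paper's swap is self-contained and quantitative --- the same telescoping estimate is reused essentially verbatim to prove the Berry--Esseen rate in \cref{thm:rate of convergence}, which your black-box invocation would not yield.
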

\begin{remark}
    The product-CLT can be viewed as an extension of Martingale-CLT \cref{thm:martingale CLT}. If $X$ is a vector of martingale difference sequence which has CLT by \cref{thm:martingale CLT}. Our product-CLT asserts that if there is another $Y$ vector with complicated unknown dependence but satisfies a law of large number condition, then the dot product $X^TY$ has a CLT. This extension is useful because no other CLT can deal with a sequence $\{X_i Y_i\}$ with unknown dependence. As we will see in the proof of invariance principle of random projections, there is no way to apply martingale-CLT directly. Instead, we can decouple the dependence, for example extract a sequence of independent random variables $X$, and  a sequence of $Y$ that has complicated dependence controlled by law of large number on the squares. 
\end{remark}		
	
In principle, one can replace the third order moment condition \cref{eqn:third order condition} by the Lindeberg condition. But we prefer it to keep the argument compact.  
After proving the theorem, we will also give a few conditions that guarantees \cref{eqn:LLN Y_k^2}

Moreover, we are interested in the rate of convergence which will need control of higher order moments. Indeed, in developing a Berry-Esseen type rate of convergence theorem, we will also need assumptions on how fast the average of $\{Y_k^2\}$ converges. We state our result as follows, 
	  \begin{theorem}[Rate of convergence product-CLT]\label{thm:rate of convergence}
    	Assume all conditions, except LLN of $Y_i^2$, in  \cref{thm:product_CLT} holds. Further assume if rate of convergence for LLN of $Y_k^2$ is controlled by the following condition
        \begin{align}\label{eqn:assumption_rate}
           \E \left[ 1 \wedge \left|\sqrt{\frac{1}{n} {\sum_{k=1}^n Y_k^2}}-1 \right| \right] <O(\varepsilon_n)
        \end{align}
    	where $\varepsilon_n$ converges to zero. Then we have 
    	\[
    	\left| \p(\frac{1}{\sqrt{n}}\sum_{k=1}^n X_k Y_k<t) - \p(G<t) \right| \le O(\frac{1}{\sqrt{n}}  \vee \varepsilon_n) \quad \forall t\in \R 
    	\]
    	where $G$ is the standard normal random variable.
  \end{theorem}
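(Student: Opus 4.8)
The plan is to exploit the independence of $\{X_k\}$ and $\{Y_k\}$ by conditioning on the whole sequence $Y=(Y_1,\dots,Y_n)$. Once $Y$ is frozen, $\{X_kY_k\}$ is still a martingale difference sequence for $\{\cF_k\}$, and by \cref{eqn:conditional moments} its conditional variances are the numbers $\E[(X_kY_k)^2\mid\cF_{k-1}]=Y_k^2$, which are \emph{deterministic} given $Y$. Thus, conditionally on $Y$, $W_n:=\frac{1}{\sqrt n}\sum_{k=1}^n X_kY_k$ is a conditionally standardized martingale with total conditional variance $V_n:=\frac1n\sum_{k=1}^n Y_k^2$. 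Writing $\Phi(t):=\p(G<t)$ and inserting the intermediate law $\mathcal N(0,V_n)$, I would split
\[
\p(W_n<t)-\Phi(t)=\E_Y\!\Big[\p(W_n<t\mid Y)-\Phi\big(t/\sqrt{V_n}\big)\Big]+\E_Y\!\Big[\Phi\big(t/\sqrt{V_n}\big)-\Phi(t)\Big],
\]
uniformly in $t$. The second (variance-mismatch) term is the easy one and is exactly what assumption \cref{eqn:assumption_rate} is designed for: since $\Phi$ is bounded and Lipschitz, an elementary estimate gives $\sup_t|\Phi(t/s)-\Phi(t)|\le C\,(1\wedge|s-1|)$ for every $s>0$, so applying it with $s=\sqrt{V_n}$ and taking expectations yields $\sup_t\E_Y|\Phi(t/\sqrt{V_n})-\Phi(t)|\le C\,\E[1\wedge|\sqrt{V_n}-1|]=O(\varepsilon_n)$.

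The first (conditional Berry--Esseen) term is the heart of the argument, and here I would reuse the machinery from the proof of \cref{thm:product_CLT}. Conditionally on $Y$, normalize $\xi_k:=X_kY_k/\sqrt{\sum_j Y_j^2}$ so that $\sum_k\E[\xi_k^2\mid\cF_{k-1}]=1$ \emph{exactly}; this exact conditional-variance identity (again \cref{eqn:conditional moments}) is precisely what removes the conditional-variance-fluctuation term that normally burdens martingale CLTs. I would then run the characteristic-function/telescoping estimate: with $g_{k}=\exp(-\tfrac{t^2}{2}\sum_{j>k}v_j)$ and $v_j=Y_j^2/\sum_i Y_i^2$, telescope $\E[e^{\mathrm{i} t M_n}]-e^{-t^2/2}=\sum_k\E[e^{\mathrm{i} tM_k}g_k-e^{\mathrm{i} tM_{k-1}}g_{k-1}]$ and Taylor-expand each conditional increment to third order. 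The bounded-third-moment hypothesis \cref{eqn:third order condition}, after taking full expectations so that the random conditional third moments are replaced by the unconditional ones, produces a bound of order
\[
L_n(Y):=\frac{\sum_{k=1}^n \E[|X_k|^3]\,|Y_k|^3}{\big(\sum_{j=1}^n Y_j^2\big)^{3/2}}\le A\,\frac{\sum_{k}|Y_k|^3}{\big(\sum_j Y_j^2\big)^{3/2}} .
\]
Feeding this into Esseen's smoothing inequality with cutoff $T\asymp 1/L_n(Y)$ gives the conditional Kolmogorov estimate $\sup_t|\p(W_n<t\mid Y)-\Phi(t/\sqrt{V_n})|\le C\,(1\wedge L_n(Y))$.

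It then remains to take $\E_Y$ of this conditional rate, and the potential small-denominator blow-up is controlled by splitting on $\{V_n\ge 1/2\}$. On this good event $\sum_j Y_j^2\ge n/2$, so $L_n(Y)\le CA\,n^{-3/2}\sum_k|Y_k|^3$, and since $\E|Y_k|^3<A$ by \cref{eqn:third order condition}, its expectation is $O(n\cdot A/n^{3/2})=O(1/\sqrt n)$. On the complementary event the conditional bound is trivially $\le 1$, while $\p(V_n<1/2)\le\p(|\sqrt{V_n}-1|>c)\le c^{-1}\E[1\wedge|\sqrt{V_n}-1|]=O(\varepsilon_n)$ by Markov. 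Adding the three contributions gives $O(1/\sqrt n)+O(\varepsilon_n)+O(\varepsilon_n)=O(\tfrac{1}{\sqrt n}\vee\varepsilon_n)$, as claimed.

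I expect the main obstacle to be the conditional Berry--Esseen step: obtaining the clean $O(L_n(Y))$ rate, rather than the $O(L_n\log n)$ or $O(L_n^{1/4})$ that a careless martingale argument yields, requires carrying the Gaussian damping factor $g_{k}$ through the telescoping and arguing carefully that the unconditional $\E|X_k|^3$ controls the expectations of the random conditional moments $\E[|\xi_k|^3\mid\cF_{k-1}]$. A secondary technical point is to keep all constants in the conditional estimate uniform in $Y$, so that the final expectation over $Y$ — in particular over the atypical region where $\sum_j Y_j^2$ is small — is legitimately governed by the truncated assumption \cref{eqn:assumption_rate} rather than by unavailable higher moments of $1/V_n$.
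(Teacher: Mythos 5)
Your decomposition after conditioning on $Y$ --- a conditional Berry--Esseen term plus a variance-mismatch term $\E[\Phi(t/\sqrt{V_n})-\Phi(t)]$ --- reproduces half of the paper's proof: your second term, including the truncation at $1$ and the Markov bound $\p(V_n<\tfrac12)\lesssim \E[1\wedge|\sqrt{V_n}-1|]$, is exactly the content of \cref{lemma:part_bound_2} (the paper's $h(t)$ analysis), and that part is sound. The genuine gap is the step you yourself flag as the main obstacle: the conditional Kolmogorov bound $\sup_t\left|\p(W_n<t\mid Y)-\Phi\left(t/\sqrt{V_n}\right)\right|\le C\,(1\wedge L_n(Y))$. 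This is not a citable lemma under the stated hypotheses, and the mechanism you sketch does not produce it. In the telescoped characteristic-function estimate, the damping $g_k=\exp(-\tfrac{t^2}{2}\sum_{j>k}v_j)$ degenerates at the late indices: for $k$ near $n$ one has $\sum_{j>k}v_j\approx 0$, so in Esseen's inequality those terms contribute on the order of $\int_{|t|\le T}t^{2}\rho_k\,dt\asymp T^{3}\rho_k$ with $\rho_k$ the $k$-th conditional Lyapunov summand; with your cutoff $T\asymp 1/L_n(Y)$ and the homogeneous scaling $\rho_k\asymp n^{-3/2}$, $L_n(Y)\asymp n^{-1/2}$, already the single term $k=n$ gives $T^{3}\rho_n\asymp 1$. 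So the sketched argument yields $O(1)$, and with an optimized cutoff only a fractional power of $L_n$ --- which is precisely why the martingale rates in the literature carry a $\log n$ factor (Bolthausen \cite{bolthausen1982}) or fractional exponents (cf.\ \cite{haeusler1988}), and why Mourrat \cite{mourrat2013} shows the $\log$ can be necessary when only the total conditional variance is pinned. Your stronger per-step condition \cref{eqn:conditional moments} may well rescue the clean rate, but that requires a genuinely different device (e.g.\ Bergstr\"om-type induction); note also that you assume only unconditional third moments, so the standard repair for the undamped late indices --- the a.s.\ modulus bound $\left|\E[e^{\mathrm{i}t\xi_k}\mid\cF_{k-1}]\right|\le e^{-t^2 v_k/3}$, which needs a.s.\ control of conditional third moments --- is not at your disposal.

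The paper diverges from you at exactly this point: it never proves a martingale Berry--Esseen at all. It first smooths with an independent Gaussian $\xi/\sqrt n$ via \cref{lemma:Normal-variation} and \cref{lemma:normal_difference} (cost $O(1/\sqrt n)$), turning the indicator into the smooth function $\Phi$; it then runs a Lindeberg swap replacing each $X_k$ by a standard Gaussian $Z_k$ inside $\E\,\Phi(T_m)$, where the exact identities \cref{eqn:conditional moments} together with the independence of $Y$ cancel the first- and second-order Taylor terms one swap at a time, leaving only third-moment remainders summed in \cref{lemma:part_bound_1}; only after all the $X$'s are Gaussian does it compare $\cN(0,V_n)$ with $G$ --- your variance-mismatch term. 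So to align with the paper you should swap first and compare variances second, instead of invoking an unproven sharp conditional Berry--Esseen. (A caveat: even the paper's own \cref{lemma:part_bound_1} is delicate here --- with $T_m$ as defined, the telescoping increments are $X_kY_k$, not $X_kY_k/\sqrt n$, so the claimed $n^{-3/2}$ per-step remainder requires bandwidth bookkeeping that the paper glosses over; the late-index smoothing difficulty you correctly identified is lurking at the same spot in the paper's argument.)
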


We then use the product-CLT theorem to obtain invariance of the distribution of inner product of randomly projected or embedded random vectors.
\begin{theorem}[Random matrix inner product CLT] \label{thm:rand_proj CLT}

Given two independent random vectors in $\R^n$: 
\[
X=\mat[c]{x_1\\ \vdots\\ x_n}, Z=\mat[c]{z_1\\ \vdots\\ z_n}
\]
with i.i.d. entries. And assume $ \E x_i = \E z_i = 0$, $\E x_i^2 = \E z_i^2 = 1$, $\E |x_i|^3 \vee \E |z_i|^3 < C <\infty $. Consider a  random  matrix $S: \R^n \to \R^m$ with i.i.d. entries  and $ \E S_{i,j} = 0$ and  $ \E S_{i,j}^2 = 1$.
Further assume $S,X,Z$ are all independent and $\E S_{1,1}^8 \vee  \E z_1^4 < C<\infty$, then we have 
\[
\frac{1}{\sqrt{m^2n+mn^2}} X^TS^TSZ \to \cN(0,1) \qquad \text{as } m,n \to \infty
\]
\end{theorem}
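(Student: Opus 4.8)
The plan is to recognize $X^T S^T S Z = \langle SX, SZ\rangle$ as a sum that fits the product-CLT template, with $X$ playing the role of the martingale-difference sequence and the remaining randomness bundled into the second factor. Setting $W_j := (S^T S Z)_j = \sum_{i=1}^m\sum_{k=1}^n S_{ij}S_{ik}z_k$, one has the exact identity
\[
X^T S^T S Z = \sum_{j=1}^n x_j W_j .
\]
Since $\{x_j\}$ are i.i.d.\ with $\E x_j=0$, $\E x_j^2=1$, they form a martingale-difference sequence with respect to their own filtration and trivially satisfy $\E[x_j\mid\cF_{j-1}]=0$, $\E[x_j^2\mid\cF_{j-1}]=1$; moreover $\{W_j\}$ depends only on $(S,Z)$ and is therefore independent of $\{x_j\}$, exactly as \cref{thm:product_CLT} demands. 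The asymmetric moment hypotheses—$x$ needs only a third moment, while $z$ and $S$ carry fourth- and eighth-moment bounds—are precisely what single out pulling $X$ out rather than $Z$.

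To match the normalization I would set $Y_j := W_j/\sqrt{m(m+n)}$, so that $\frac{1}{\sqrt{n}}\sum_j x_j Y_j = \frac{1}{\sqrt{m^2n+mn^2}}\, X^T S^T S Z$ since $m(m+n)\cdot n = m^2n+mn^2$. A direct second-moment computation gives $\E[W_j^2]=m(m+n)+O(m)$, hence $\E[Y_j^2]\to 1$; a fourth-moment computation—this is where $\E S_{11}^8$ and $\E z_1^4$ enter, to control the terms in which many $S$-indices coincide (e.g.\ the diagonal contribution $S_{ij}^8 z_j^4$)—gives $\E[W_j^4]=O\big((m(m+n))^2\big)$, so $\E[|Y_j|^3]\le(\E Y_j^4)^{3/4}$ is bounded uniformly in $m,n$. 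This verifies the third-moment hypothesis.

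The only genuinely substantive condition left is the law of large numbers of \cref{thm:product_CLT}, namely
\[
\frac{1}{n}\sum_{j=1}^n Y_j^2 \;=\; \frac{1}{n\,m(m+n)}\, Z^T (S^TS)^2 Z \;\xrightarrow{\;p\;}\; 1,
\]
where I used $\sum_j W_j^2=\|S^TSZ\|^2=Z^T(S^TS)^2Z$. I would prove this by the second-moment method. The mean is $\E[Z^T(S^TS)^2Z]=\E_S\tr\big((S^TS)^2\big)=m^2n+mn^2+O(mn)$, matching the normalization, so it remains to show $\var\big(Z^T(S^TS)^2Z\big)=o\big((m^2n+mn^2)^2\big)$. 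Via the law of total variance I split this into $\var_S\big(\tr((S^TS)^2)\big)$ and $\E_S\big[2\tr((S^TS)^4)+(\nu_4-3)\sum_j((S^TS)^2)_{jj}^2\big]$ with $\nu_4=\E z_1^4$, using the standard quadratic-form variance formula for the inner $Z$-expectation. Each of these is a polynomial of degree at most eight in the entries of $S$, which is exactly why $\E S_{11}^8<\infty$ is assumed; a term-by-term index-matching count shows every surviving contribution is of smaller order than $(m^2n+mn^2)^2$ (each is $O(1/m)$ or $O(1/n)$ relative). The combinatorics of deciding which matchings dominate is the main obstacle, being the most error-prone part of the argument.

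With all hypotheses of \cref{thm:product_CLT} verified for the triangular array $\{(x_j,Y_j)\}_{j\le n}$—the $Y_j$ depend on $m$ as well, but the moment and LLN bounds above are uniform, so the theorem applies along the joint limit $m,n\to\infty$—I conclude $\frac{1}{\sqrt{n}}\sum_j x_j Y_j\to\cN(0,1)$, that is, $\frac{1}{\sqrt{m^2n+mn^2}}X^TS^TSZ\to\cN(0,1)$. The lower-order discrepancy $\E[Y_j^2]=1+O(1/(m+n))$ is harmless, being absorbed directly into the LLN limit (equivalently by Slutsky).
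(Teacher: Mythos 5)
Your proposal is correct and follows essentially the same route as the paper: the paper likewise writes $X^TS^TSZ=\sum_j x_j\,(S^TSZ)_j$ with normalization $a_{n,m}=\sqrt{m^2+mn}$, verifies $\E y_1^2\to 1$, $\E y_1^4=O(1)$ (which supplies the uniform third-moment hypothesis, with $\E S_{1,1}^8$ and $\E z_1^4$ entering exactly where you say), and $\E y_1^2y_2^2\to 1$ via the index-matching counts you sketch, then deduces the weak LLN for $\frac1n\sum_j y_j^2$ by Chebyshev and invokes \cref{thm:product_CLT}. Your law-of-total-variance/trace-functional packaging of the variance bound (via $\tr((S^TS)^2)$, $\tr((S^TS)^4)$, and the diagonal terms) and your explicit remark that the product-CLT applies to the triangular array because the bounds are uniform in $m$ are organizational refinements of the same second-moment argument, not a different method, and the orders you assert all check out.
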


\begin{theorem}[Random matrix invariance principle] \label{thm:rand_proj rate of CLT}

Given the same moment assumptions as in \cref{thm:rand_proj CLT}, the following bounds hold,
\begin{align}
    \sup_t \left| \p(\frac{1}{\sqrt{m^2n+mn^2}} X^TS^TSZ <t) - \p(G<t)  \right| & \le O\left(\frac{1}{\sqrt{n}}+ \frac{1}{\sqrt{m}} \right) \label{eqn:rand_proj rate to normal} 
    \\
    \sup_t \left| \p(\frac{1}{\sqrt{m^2n+mn^2}} X^TS^TSZ <t) - \p(\frac{1}{\sqrt{n}} X^TZ<t)  \right| &\le O\left(\frac{1}{\sqrt{n}}+ \frac{1}{\sqrt{m}} \right)  \label{eqn:rand_proj rate to original vector}
\end{align}
where $G$ is a standard normal random variable.
\end{theorem}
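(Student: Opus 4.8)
The plan is to derive both bounds from the rate form of the product-CLT, \cref{thm:rate of convergence}, by conditioning on the pair $(S,Z)$. Set $w := S^T S Z \in \R^n$, so that $w_i = \sum_{l=1}^m\sum_{j=1}^n S_{li}S_{lj}z_j$ and we have the exact identity
\[
X^T S^T S Z = \sum_{i=1}^n x_i\, w_i .
\]
Because the entries $x_i$ are i.i.d.\ with mean $0$ and variance $1$, they form a martingale difference sequence automatically satisfying \cref{eqn:conditional moments}, and since $w$ depends only on $(S,Z)$ it is independent of $X$. Putting $Y_i := w_i/c_n$ with $c_n := \sqrt{m^2+mn}$, we get $\frac{1}{\sqrt n}\sum_i x_i Y_i = \frac{1}{\sqrt{m^2 n + m n^2}}X^T S^T S Z$ since $\sqrt n\,c_n = \sqrt{m^2 n + m n^2}$. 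Thus it remains only to verify the hypotheses of \cref{thm:rate of convergence} for the array $\{(x_i,Y_i)\}$, the conclusion being precisely \eqref{eqn:rand_proj rate to normal}.

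The scaling $c_n$ is picked so that $\frac1n\sum_i Y_i^2$ has mean near $1$. Taking expectation first over $Z$ (using $\E[ZZ^T]=I$) and then over $S$,
\[
\E\|w\|^2 = \E\,\tr\!\big((S^TS)^2\big) = n m^2 + m n^2 + n m\,(\E S_{1,1}^4 - 2),
\]
whence $\E\big[\frac1n\sum_i Y_i^2\big] = 1 + O\!\big(\tfrac{1}{m+n}\big)$. The third-moment requirement \cref{eqn:third order condition} holds uniformly for $\{Y_i\}$ as well: splitting $w_i = (S^TS)_{ii}z_i + \sum_{j\ne i}(S^TS)_{ij}z_j$ into pieces of order $m$ and $\sqrt{mn}$ and using $m^3/c_n^3\le1$, $(mn)^{3/2}/c_n^3\le1$ together with $\E S_{1,1}^8, \E z_1^4 < C$, one gets $\E|Y_i|^3 = O(1)$.

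The crux is the rate hypothesis \cref{eqn:assumption_rate}: that $\frac1n\sum_i Y_i^2 = \frac{1}{n c_n^2}\|S^TSZ\|^2$ concentrates at $1$ with $\varepsilon_n = O(1/\sqrt n + 1/\sqrt m)$. Since $|\sqrt a - 1|\le|a-1|$, by Cauchy--Schwarz it suffices to prove $\var(V) = O\big((\E V)^2(1/n + 1/m)\big)$ for $V := Z^T(S^TS)^2 Z$, which I would attack through the law of total variance. The outer term is clean: permutation and sign symmetry force $\E_S[(S^TS)^2] = \gamma I$ with $\gamma = m^2+mn+O(m)$, so $\E_S[V\mid Z] = \gamma\|Z\|^2$ and $\var_Z(\gamma\|Z\|^2) = \gamma^2 n(\E z_1^4 - 1) = O\big((\E V)^2/n\big)$, using $\E z_1^4 < C$. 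The inner term $\E_Z\big[\var_S(V\mid Z)\big]$ is the main obstacle and the only genuinely laborious part: since $V$ is a quartic form in the entries of $S$, its conditional variance is an eighth-moment computation in those entries, which is exactly what the hypothesis $\E S_{1,1}^8 < C$ controls; the pairing bookkeeping shows its dominant contribution is $O\big((\E V)^2/m\big)$. Combining the two gives $\sqrt{\var V}/\E V = O(1/\sqrt n + 1/\sqrt m)$, establishing \cref{eqn:assumption_rate}.

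With every hypothesis in place, \cref{thm:rate of convergence} yields \eqref{eqn:rand_proj rate to normal} because $O(1/\sqrt n \vee \varepsilon_n) = O(1/\sqrt n + 1/\sqrt m)$. Bound \eqref{eqn:rand_proj rate to original vector} then follows by the triangle inequality: $\frac{1}{\sqrt n}X^T Z = \frac{1}{\sqrt n}\sum_i x_i z_i$ is a normalized sum of i.i.d.\ mean-zero, unit-variance terms with $\E|x_i z_i|^3 = \E|x_i|^3\,\E|z_i|^3 < \infty$, so the classical Berry--Esseen theorem gives $\sup_t|\p(\frac{1}{\sqrt n}X^T Z < t) - \p(G<t)| = O(1/\sqrt n)$, and adding this to \eqref{eqn:rand_proj rate to normal} produces $O(1/\sqrt n + 1/\sqrt m)$.
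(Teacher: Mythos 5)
Your proposal is correct and follows the same skeleton as the paper's proof --- rescale by $c_n=\sqrt{m^2+mn}$, feed $Y_i=(S^TSZ)_i/c_n$ into \cref{thm:rate of convergence}, verify \cref{eqn:assumption_rate} with $\varepsilon_n=O(1/\sqrt{n}+1/\sqrt{m})$ via $|\sqrt{a}-1|\le|a-1|$ plus a second-moment (Cauchy--Schwarz/Jensen) bound, then obtain \cref{eqn:rand_proj rate to original vector} from classical Berry--Esseen and the triangle inequality --- but you organize the key moment estimate differently. The paper expands $\E[(\frac1n\sum_k Y_k^2-1)^2]$ into $\frac1n\E(Y_1^2-1)^2+\frac{n^2-n}{n^2}\E(Y_1^2-1)(Y_2^2-1)$ and reads off $\E Y_1^2$, $\E Y_1^4$, $\E Y_1^2Y_2^2$ from the exhaustive index-pairing case analysis already carried out in \cref{lemma:random projection weak LLN Y^2}; you instead apply the law of total variance to $V=Z^T(S^TS)^2Z$ conditionally on $Z$. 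Your route makes the $O(1/n)$ contribution genuinely more transparent: the identity $\E_S[(S^TS)^2]=\gamma I$ with $\gamma=m^2+mn+O(m)$ (mean-zero independence of the entries alone kills the off-diagonal) yields the clean term $\gamma^2 n(\E z_1^4-1)=O((\E V)^2/n)$, which in the paper is buried inside the case-2 counting; incidentally it also isolates correctly the $O(1/n)$ piece that the paper's summary line mislabels when it asserts $\E(Y_1^2-1)^2=O(1/m)$ (that quantity is only $O(1)$, harmlessly, since it is divided by $n$). The trade-off is that your inner term $\E_Z[\var_S(V\mid Z)]$ --- the eighth-moment pairing bookkeeping, which is exactly where the paper's \cref{lemma:random projection weak LLN Y^2} does its heavy lifting across the six-case tables --- is asserted rather than executed; your claimed $O((\E V)^2/m)$ is consistent with the lemma's outputs ($\E Y_1^2Y_2^2=1+O(1/m)$, $\E Y_1^4=O(1)$), so the sketch is credible, but as written it is the one load-bearing step without a proof. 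One small accounting point: your reduction ``it suffices to bound $\var(V)$'' also tacitly uses the bias estimate $\E V/(nc_n^2)-1=O(\frac{1}{m+n})$, which your trace computation (matching \cref{eqn:Y_squared}) does supply, but it should be stated as entering the second-moment bound alongside the variance.
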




\bigskip

\section{Proof of  \cref{thm:product_CLT}}\label{sec: CLT theorem}

\subsection{A proof based on Lindeberg swapping}
	\begin{proof}
		Let us begin with the Lindeberg argument.
		
		Take any function $f$ from $ C_c^{\infty}(\R)$ smooth function with bounded support on the real line. Let $S_n=  \sum_{1}^{n} X_i Y_i  $. Let $Z, \{Z_i\}_{1\le i \le n}$ be independent standard normal random variables. It is sufficient to show 
		\[
		\E [f(\frac{1}{\sqrt{n}} S_n)] -\E [f(Z)]  \to 0 
		\]
   		
		
		
		
		

        Our strategy is to split the difference into two parts 
        \[
		\E [f(\frac{1}{\sqrt{n}} S_n)] -\E [f(\frac{1}{\sqrt{n}} \sum_{i=1}^{n}Z_iY_i )],  \quad \text{and } \E [f(\frac{1}{\sqrt{n}} \sum_{i=1}^{n}Z_iY_i )]- \E[f(Z)]
		\]
        then show both are small.
        
		First step, let us try to show
		\[
		\E [f(\frac{1}{\sqrt{n}} S_n)] -\E [f(\frac{1}{\sqrt{n}} \sum_{i=1}^{n}Z_iY_i )]  \to 0 
		\]
		We write the difference as a telescopic sum, 
		\[
		\Delta_n:=f(\frac{1}{\sqrt{n}} S_n ) -f(\frac{1}{\sqrt{n}}\sum_{i=1}^{n} Z_iY_i)= \sum_{k=1}^{n} f(T_k ) -f(T_{k-1} )
		\]
		where 
		\[
		T_k= \frac{1}{\sqrt{n}} \left[ \sum_{i=1}^{k} X_iY_i+ \sum_{i=k+1}^{n}Z_iY_i \right]
		\]
		
		To make notation easier to read, denote 
		\[
		U_k:= \frac{1}{\sqrt{n}} \left[ \sum_{i=1}^{k-1} X_iY_i+ \sum_{i=k+1}^{n}Z_iY_i \right] 
		\]
		It is easy to see $U_k = T_k-\frac{1}{\sqrt{n}} X_kY_k =T_{k-1} -\frac{1}{\sqrt{n}} Z_kY_k$.
		Now let us take a Taylor expansion on $f(T_k)$, $f(T_{k-1})$ around $U_k$, 
		\[
		f(T_k) =f(U_k) + f'(U_k) \frac{1}{\sqrt{n}} X_kY_k +\frac{1}{2}f''(U_k)\frac{1}{n} X_k^2 Y_k^2 +O(n^{-\frac{3}{2}} X_k^3Y_k^3 \sup_x f'''(x))
		\] 
		\[
		f(T_{k-1}) =f(U_k) + f'(U_k) \frac{1}{\sqrt{n}} Z_k Y_k +\frac{1}{2}f''(U_k)\frac{1}{n} Z_k^2 Y_k^2 +O(n^{-\frac{3}{2}} Z_k^3 Y_k^3 \sup_x f'''(x))
		\] 
		Since $Y_k$ is independent with $X_k,  Z_k$, by conditioning on $\cF_{k-1}$ the first order terms match. 
		\begin{align*}
		    \E [f'(U_k) \frac{1}{\sqrt{n}} X_kY_k] 
		    & = \E [f'(U_k) \frac{1}{\sqrt{n}} Y_k\E[X_k|\cF_{k-1}] ]  =0  \\
		    \E [f'(U_k) \frac{1}{\sqrt{n}} Z_kY_k] 
		    & = \E [f'(U_k) \frac{1}{\sqrt{n}} Y_k\E[Z_k] ]  =0  
		\end{align*}
		Similar argument shows second terms match,
		\begin{align*}
		    \E [f''(U_k) \frac{1}{n} X_k^2Y_k^2] 
		    & = \E [\E [f''(U_k) \frac{1}{n} X_k^2Y_k^2 | \cF_{k-1}] ] \\
		    & = \E [\frac{1}{n}f''(U_k)  Y_k^2 \E [ X_k^2 | \cF_{k-1}] ] \\
		    & = \E [\frac{1}{n}f''(U_k)  Y_k^2  ]  \\
		    \E [f''(U_k) \frac{1}{n} Z_k^2Y_k^2] 
		    & = \E [f''(U_k) \frac{1}{n} Y_k^2] \E [Z_k^2] \\
		    & = \E [\frac{1}{n}f''(U_k)  Y_k^2  ] 
		\end{align*}
		Therefore, we obtain 
		\[
		\E f(T_k)-f(T_{k-1}) = O(n^{-\frac{3}{2}} \E X_k^3Y_k^3 \sup_x f'''(x)  )
		\]
		Sum up the $n$ terms,
		\[
		\E \Delta_n = O(\frac{1}{\sqrt{n}} \E (X_k^3 +Z_k^3)Y_k^3 \sup_x f'''(x)  )
		\]
		
		In the case $X_k, Y_k$ have finite third moments, we conclude replacing $X_i$ by Gaussian random variables will only introduce the  difference of the order $n^{-1/2}$
		\[
		\E \Delta_n = O(\frac{1}{\sqrt{n}} )
		\]
		Now it suffices to show \[
		\frac{1}{\sqrt{n}} \sum_{i=1}^{n}Z_iY_i \to \cN(0,1)
		\]
		
		Notice by computing the moment generating function, we can verify, for all $n$
		\[
		\frac{1}{\sqrt{\sum Y_i^2}} \sum_{i=1}^{n}Z_iY_i \sim \cN(0,1)
		\]
		Then by Slutsky's theorem and condition $\frac{1}{n} \sum_{i=1}^{n}Y_i^2 \to 1$, we conclude our desired result.
		
	\end{proof}


\subsection{Alternative assumptions}
	Here we discuss a variation of the condition \cref{eqn:LLN Y_k^2} in product-CLT. This version has the advantage that the assumptions are  easier to verify in practice. We only impose the mixed second moments conditions which can be approximated with empirical data.
	\begin{proposition}\label{prop:second moment condition}
		In  \cref{thm:product_CLT}, if $Y_k$ satisfied,
		\begin{equation*}
		 \E Y_k^2 \to 1, \quad \E [|Y_k|^4]<C < \infty, \qquad  \forall  k\in \mathbb{N}
		\end{equation*}
		
		Further assume the mixed second moments satisfy
		\begin{equation} \label{assumption:mixed second moments}
		\frac{1}{n^2}\sum_{i\ne j}\E [Y_i^2 Y_j^2 ]\xrightarrow[]{\quad n\to \infty \quad} 1
		\end{equation}
		Then the following LLN holds
		\[
		\frac{1}{n}\sum_k Y_k^2 \xrightarrow{\  p \ } 1
		\]
	\end{proposition}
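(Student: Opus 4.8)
The plan is to prove the stronger statement that $S_n := \frac{1}{n}\sum_{k=1}^n Y_k^2$ converges to $1$ in $L^2$, since $L^2$-convergence to a constant implies convergence in probability by Chebyshev's inequality. Concretely, I would show that $\E[(S_n-1)^2]\to 0$ by decomposing it as $\var(S_n) + (\E[S_n]-1)^2$ and controlling each piece separately. This reduces the whole proposition to two moment computations: the convergence of the first moment $\E[S_n]$ and the convergence of the second moment $\E[S_n^2]$.

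For the first moment, I would simply note that $\E[S_n] = \frac{1}{n}\sum_{k=1}^n \E[Y_k^2]$, and since $\E[Y_k^2]\to 1$ by hypothesis, the Cesàro average also converges, giving $\E[S_n]\to 1$ and hence $(\E[S_n]-1)^2\to 0$. For the second moment, I would expand the square and separate the diagonal from the off-diagonal terms:
\[
\E[S_n^2] = \frac{1}{n^2}\sum_{i\ne j}\E[Y_i^2 Y_j^2] + \frac{1}{n^2}\sum_{k=1}^n \E[Y_k^4].
\]
The off-diagonal sum converges to $1$ precisely by the mixed second-moment assumption \eqref{assumption:mixed second moments}, while the diagonal sum is bounded by $\frac{1}{n^2}\cdot nC = C/n \to 0$ using the uniform fourth-moment bound $\E[|Y_k|^4]<C$. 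Combining these gives $\E[S_n^2]\to 1$.

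Putting the pieces together, $\var(S_n) = \E[S_n^2] - (\E[S_n])^2 \to 1 - 1 = 0$, so that $\E[(S_n-1)^2] = \var(S_n) + (\E[S_n]-1)^2 \to 0$, which is exactly $L^2$-convergence of $S_n$ to $1$, and therefore convergence in probability.

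The argument is essentially a second-moment (weak law) computation, so there is no deep obstacle; the only point requiring care is the role of the two hypotheses. The uniform $L^4$ bound is exactly what makes the diagonal contribution negligible at rate $O(1/n)$, and the mixed-moment assumption is exactly what forces the off-diagonal mass to concentrate at $1$. If either were dropped the variance need not vanish, so the main thing to verify is that these two bounds combine to kill $\var(S_n)$; everything else is routine.
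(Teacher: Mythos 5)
Your proposal is correct and is essentially the paper's proof: both are second-moment arguments that expand $\E\bigl[(\frac{1}{n}\sum_k Y_k^2-1)^2\bigr]$, kill the diagonal term at rate $O(1/n)$ via the uniform fourth-moment bound, send the off-diagonal sum to $1$ via the mixed-moment hypothesis, handle the first moment by a Ces\`aro average, and conclude via Chebyshev. Your bias--variance packaging of the same expansion is only a cosmetic difference from the paper's direct computation.
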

	
	\begin{proof}
	By Chebyshev's inequality,
	\begin{align*}
	    \p (\frac{1}{n}|\sum_i (Y_i^2-1)|>\varepsilon)
	    &\le \frac{\frac{1}{n^2}\E [|\sum_i (Y_i^2-1)|^2]}{\varepsilon^2}
	\end{align*}
	Notice 
	\begin{align*}
	    \frac{1}{n^2}\E [|\sum_i (Y_i^2-1)|^2] 
	    &= \frac{1}{n^2}\left[ \sum_{i} \E Y_i^4 + \sum_{i\ne j} \E Y_i^2 Y_j^2 - 2n\sum_{i} \E Y_i^2 +n^2 \right]\\
	    &\to \frac{1}{n^2}\left[  \sum_{i} \E Y_i^4 +\sum_{i\ne j} \E Y_i^2 Y_j^2 \right] -1 \\
	    & \to 0
	\end{align*}
	Therefore we see for any $\epsilon >0$, 
	\begin{equation*}
	    \p (\frac{1}{n}|\sum_i (Y_i^2-1)|>\varepsilon)
	     \to 0
	\end{equation*}
	This implies there is a weak law of large number for the sequence $Y_n^2$, namely
	\[
	\frac{1}{n}\sum_k Y_k^2 \xrightarrow{\  p \ } 1
	\]
	\end{proof}
	
In practice one will only need to verify that the average 
$\frac{1}{n^2} \sum_{i,j} Y_i^2 Y_j^2 $ is close to 1. It turns out that the mixed second moments condition is equivalent to a fourth moment convergence condition.
\begin{proposition} 
    In  \cref{prop:second moment condition}, the condition \cref{assumption:mixed second moments} is equivalent to 
    \begin{equation}
        \E \left[ P_n^4 \right] \to 3, \qquad \text{where } P_n =\frac{1}{\sqrt{n}} \sum_{i} X_i Y_i 
    \end{equation}
\end{proposition}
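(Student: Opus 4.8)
The plan is to compute $\E[P_n^4]$ directly by expanding the fourth power and exploiting both the independence of the two sequences and the martingale-difference structure of $\{X_k\}$. Writing $P_n^4 = \frac{1}{n^2}\sum_{i,j,k,l} X_iX_jX_kX_l\,Y_iY_jY_kY_l$ and using that $\{X_k\}$ is independent of $\{Y_k\}$, each term factors, giving
\[
\E[P_n^4] = \frac{1}{n^2}\sum_{i,j,k,l}\E[X_iX_jX_kX_l]\,\E[Y_iY_jY_kY_l].
\]
I would then organize the quadruples $(i,j,k,l)$ by the partition their multiplicities induce and evaluate the $X$-factor in each case by conditioning on the filtration $\cF$.

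First I would dispose of every quadruple in which the largest index occurs exactly once: conditioning just below that index and using $\E[X_k\mid\cF_{k-1}]=0$ annihilates the term. This removes all fully-distinct ($1{+}1{+}1{+}1$) and single-pair ($2{+}1{+}1$) patterns, as well as the $3{+}1$ patterns whose singleton carries the maximal index. The $2{+}1{+}1$ pattern with the doubled index maximal survives the first conditioning but is then killed because $\E[X_bX_c]=0$ for $b\neq c$, again by orthogonality of the differences. The surviving contributions are the two-pair ($2{+}2$) terms, the fully diagonal terms, and the $3{+}1$ terms whose tripled index is the maximum. For a $2{+}2$ quadruple with values $a\neq b$, conditioning on $\cF_{\max(a,b)-1}$ and using $\E[X_k^2\mid\cF_{k-1}]=1$ gives $\E[X_a^2X_b^2]=1$; counting the $6$ orderings of the multiset $\{a,a,b,b\}$ shows these terms sum to exactly $\frac{3}{n^2}\sum_{i\neq j}\E[Y_i^2Y_j^2]$, three times the quantity appearing in \cref{assumption:mixed second moments}.

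It then remains to argue the two leftover families are negligible. The diagonal sum is $\frac{1}{n^2}\sum_i \E[X_i^4]\,\E[Y_i^4]=O(1/n)$ by the bounded fourth-moment hypotheses. The delicate family is the $3{+}1$ sum $\frac{4}{n^2}\sum_{a>b}\E[X_a^3X_b]\,\E[Y_a^3Y_b]$, where the conditional third moment $\E[X_a^3\mid\cF_{a-1}]$ is not controlled by the hypotheses and does not vanish in general; I expect this to be the main obstacle. My plan to handle it is to fix $a$ and write $\sum_{b<a}\E[Y_a^3Y_b]\,\E[X_a^3X_b]=\E\big[\E[X_a^3\mid\cF_{a-1}]\,W_{a-1}\big]$ with $W_{a-1}=\sum_{b<a}\E[Y_a^3Y_b]\,X_b$, then exploit orthogonality of the martingale differences: since the weights are bounded (Hölder plus the fourth-moment bound on $Y$), the cross terms cancel and $\|W_{a-1}\|_2^2=\sum_{b<a}\E[Y_a^3Y_b]^2\,\E[X_b^2]=O(n)$, while a conditional Jensen/Lyapunov estimate bounds $\E[X_a^3\mid\cF_{a-1}]$ in $L^{4/3}$ by the fourth moment of $X_a$. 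A Hölder (or Burkholder) estimate then yields $\big|\sum_{b<a}\cdots\big|=O(\sqrt n)$ per index $a$, so the whole family is $O(1/\sqrt n)$.

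Combining the three pieces gives the asymptotic identity $\E[P_n^4]=\frac{3}{n^2}\sum_{i\neq j}\E[Y_i^2Y_j^2]+o(1)$, and reading it in both directions immediately yields the claimed equivalence: the mixed second moments tend to $1$ if and only if $\E[P_n^4]\to 3$. The only quantitative input beyond the stated hypotheses is finiteness of the fourth moments of $X_k$, which is implicit in the assertion that $\E[P_n^4]$ exists.
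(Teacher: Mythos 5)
Your proposal is correct, and it follows the paper's overall route — expand $\E[P_n^4]$ over index quadruples, classify by multiplicity pattern, use conditioning to kill terms, keep the $2{+}2$ family (factor $3$, each term reduced to $\E[Y_i^2Y_j^2]$ via $\E[X_i^2X_j^2]=1$ from \cref{eqn:conditional moments}), and treat the diagonal as $O(1/n)$ — but you diverge at the $3{+}1$ pattern, and there you are in fact more careful than the paper. The paper asserts that every quadruple with a singleton index vanishes because $\E[X_{i_k}\mid\cF_{k-1}]=0$; that conditioning argument only works when the singleton carries the \emph{maximal} index. When the tripled index $a$ exceeds the singleton $b$, the factor $\E[X_a^3X_b]=\E\bigl[X_b\,\E[X_a^3\mid\cF_{a-1}]\bigr]$ need not vanish, since the conditional third moment of a martingale difference is uncontrolled by the hypotheses (e.g.\ take $X_2$ conditionally skewed with the sign of its skewness tied to $X_1$, so that $\E[X_2^3\mid\cF_1]=cX_1$ and $\E[X_2^3X_1]=c\neq 0$); and since there are order $n^2$ such quadruples, they cannot be dismissed by counting alone — this is a genuine gap in the paper's one-line dismissal, which your extra estimate repairs. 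Your scheme — fixing $a$, writing the family as $\E[\E[X_a^3\mid\cF_{a-1}]\,W_{a-1}]$ with $W_{a-1}=\sum_{b<a}\E[Y_a^3Y_b]X_b$, bounding the weights by H\"older and the fourth moment of $Y$, and concluding $O(\sqrt n)$ per index $a$, hence $O(n^{-1/2})$ in total — is exactly the missing ingredient. One precision: the $L^{4/3}$ bound on $\E[X_a^3\mid\cF_{a-1}]$ must be paired with $\|W_{a-1}\|_4$, not with the $\|W_{a-1}\|_2=O(\sqrt n)$ bound you compute, since $4/3$ and $2$ are not conjugate exponents; so the Burkholder route you mention parenthetically is the one you actually need, giving $\|W_{a-1}\|_4\lesssim\bigl(\sum_{b<a}\E[Y_a^3Y_b]^2\bigr)^{1/2}\sup_b(\E X_b^4)^{1/4}=O(\sqrt n)$. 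That uniform fourth-moment hypothesis on $X$ goes beyond the third-moment assumption of \cref{thm:product_CLT}, but, as you note, it is needed even for $\E[P_n^4]$ to be meaningful, and the paper uses it tacitly anyway to make the diagonal term $n^{-2}\sum_i\E X_i^4\,\E Y_i^4$ negligible; with it, your version of the argument actually closes, whereas the paper's does not as written.
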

\begin{proof}
	\[
	\E[P_n^4]=n^{-2} \sum_{1\le i_1,\dots i_4\leq n} \E X_{i_1}Y_{i_1}\dots  X_{i_4}Y_{i_4}
	\]
	Now we want to analyze the indices $I=\{i_1, i_2, i_3, i_4\}$. If one of index $i_k$ is different from the other three, then  $\E [X_{i_k}|\cF_{k-1}]=0$ implies the whole product vanish. Therefore the only surviving terms must be either all indices the same or indices appear as pairs. Namely 
	\[
	\E[P_n^4]=n^{-2} \left[\sum_{1\le i\le n} \E X_i^4 Y_i^4 +  3 \sum_{1\le i \ne j \le n}  \E X_{i}^2Y_{i}^2 X_{j}^2Y_{j}^2 \right]
	\]
	where the factor $3$ is because the pairs have three cases $\{ (i_1=i_2, i_3=i_4), (i_1=i_3, i_2=i_4), (i_1=i_4, i_2=i_4) \}$.   
	
	Then notice $\E (X_{i}^2X_{j}^2) = \E[\E [X_{i}^2X_{j}^2|\cF_{min(i,j)}]] =1$, we find
	\[
	\E X_{i}^2Y_{i}^2 X_{j}^2Y_{j}^2=\E (X_{i}^2X_{j}^2) \E Y_{i}^2 Y_{j}^2= \E Y_{i}^2 Y_{j}^2
	\]
	
	Combining with the assumption that fourth moment is bounded we see 
	\[
	\E[P_n^4] \to 3 \Longleftrightarrow \frac{1}{n^2}\sum_{i\ne j} \E Y_{i}^2 Y_j^2 \to 1
	\]
\end{proof}





\section{Proof of  \cref{thm:rate of convergence}}\label{sec: CLT rate theorem}
 The proof will be several steps. First we record a variation formula of Gaussian density in  \cref{lemma:Normal-variation}. Then we use the variation formula to rewrite the error term by introducing a standard normal variable in  \cref{lemma:normal_difference}. Then we use Lindeberg type argument to reduce the control of error to control of two terms. One term is a telescopic sum which we will control in  \cref{lemma:part_bound_1} with the moments information. The other term is the difference of two cumulative distribution functions (cdfs) that are close to normal cdfs which we will control in  \cref{lemma:part_bound_2} with the LLN property of $Y_i^2$, namely condition \cref{eqn:assumption_rate}. 
 
\begin{lemma}\label{lemma:Normal-variation}
Let $X$ and $\xi$ be two independent random variables. Let $\sigma=\sqrt{\E \xi^2}$. Let $\Phi$ be the cumulative distribution of standard normal. Denote 
\[
\delta=\sup_t|\p(X\le t)-\Phi(t)| \qquad \delta^*=\sup_t |\p(X+\xi\le t) -\Phi(t)|
\]
Then 
\[
\delta \le 2\delta^* +\frac{5}{\sqrt{2\pi}}\sigma, \quad \delta^* \le 2 \delta + \frac{3}{2\sqrt{\pi}}\sigma
\]
\end{lemma}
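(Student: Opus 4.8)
The plan is to read this as a smoothing inequality of Esseen type, organized around the single convolution identity
\[
F^*(t) := \p(X+\xi \le t) = \E\,[\,F(t-\xi)\,], \qquad F(t) := \p(X \le t),
\]
which holds because $\xi$ is independent of $X$. Writing $R := F-\Phi$ (so that $\sup_t|R(t)| = \delta$), the only analytic inputs I expect to need are the Lipschitz bound $\sup_t \Phi'(t) = \frac{1}{\sqrt{2\pi}}$, the moment comparison $\E|\xi| \le \sqrt{\E\xi^2} = \sigma$, and Chebyshev's inequality $\p(|\xi-\E\xi|>\lambda)\le \sigma^2/\lambda^2$.

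The forward bound $\delta^* \le 2\delta + \frac{3}{2\sqrt{\pi}}\sigma$ is the easy direction. I would expand the identity as
\[
F^*(t) - \Phi(t) = \E\,[\,R(t-\xi)\,] + \E\,[\,\Phi(t-\xi) - \Phi(t)\,].
\]
The first expectation is at most $\delta$ in absolute value, and the second is at most $\frac{1}{\sqrt{2\pi}}\E|\xi| \le \frac{\sigma}{\sqrt{2\pi}}$ by the mean value theorem. Taking the supremum over $t$ already gives $\delta^* \le \delta + \frac{\sigma}{\sqrt{2\pi}}$, which is stronger than the stated inequality, so the stated constants here are simply not tight.

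The reverse bound $\delta \le 2\delta^* + \frac{5}{\sqrt{2\pi}}\sigma$ is where I expect the real difficulty. One cannot naively ``deconvolve'': the same expansion gives $F-F^* = \E[\Phi(t)-\Phi(t-\xi)] + \E[R(t)-R(t-\xi)]$, and the last term is only controlled by $2\delta$, making the estimate circular. Instead I would run an Esseen-type argument. Assume the positive discrepancy dominates (the negative case is symmetric, evaluating to the left), fix small $\eta>0$, and choose $t_0$ with $F(t_0)-\Phi(t_0)\ge \delta-\eta$. Monotonicity of $F$ and the Lipschitz bound on $\Phi$ then propagate the discrepancy across a window of width $h$ to the right of $t_0$: for all $u\in[t_0,t_0+h]$ one has $F(u)-\Phi(u)\ge \delta-\eta-\frac{h}{\sqrt{2\pi}}$. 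Evaluating the identity at $s := t_0 + \E\xi + h/2$, so that the event $E=\{s-\xi\in[t_0,t_0+h]\}$ is centered at the mean of $\xi$, and bounding $R(s-\xi)$ from below by the windowed estimate on $E$ and by $-\delta$ off $E$, I obtain
\[
\delta^* \ge F^*(s)-\Phi(s) \ge \Big(\delta-\eta-\tfrac{h}{\sqrt{2\pi}}\Big)(1-q) - \delta\,q - \tfrac{\sigma}{\sqrt{2\pi}},
\]
where $q := \p(|\xi-\E\xi|>h/2) \le 4\sigma^2/h^2$ by Chebyshev.

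The crux, and the reason the naive approach gives a suboptimal $\sigma^{2/3}$ rate, is the treatment of $h$: if one optimizes $h$ freely against the $\sigma^2/h^2$ tail, the window width and the tail balance at a power of $\sigma$ below one. The fix is to fix $h$ to be a constant multiple of $\sigma$. Then $q$ is a constant strictly below $\frac12$, so the truncated tail costs only a bounded multiplicative factor — this is exactly what manufactures the $2$ in front of $\delta^*$ — while the window penalty $\frac{h}{\sqrt{2\pi}}$ and the mean-shift penalty $\frac{\sigma}{\sqrt{2\pi}}$ both remain linear in $\sigma$. Rearranging into $\delta(1-2q) \le \delta^* + O(\sigma)$ and letting $\eta\to 0$ produces a bound of exactly the stated shape $\delta \le 2\delta^* + c\,\sigma$; pinning down the precise constant $c$ (and, if desired, shaving it by replacing the worst-case window value with its average over the window) is then a routine optimization of the single free parameter $h/\sigma$.
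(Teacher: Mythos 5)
Your reconstruction is sound in structure, and note first that the paper does not actually prove this lemma --- its ``proof'' is the single line ``See for example \cite{bolthausen1982}'' --- so there is no in-paper argument to compare against; what you have written is essentially the standard Esseen-type smoothing argument behind Bolthausen's lemma, which is the right circle of ideas. Your forward direction is correct and strictly stronger than stated: $\delta^* \le \delta + \sigma/\sqrt{2\pi} \le 2\delta + \frac{3}{2\sqrt{\pi}}\sigma$. Your reverse direction is also correct step by step: the convolution identity, the monotonicity/Lipschitz window bound $R(u)\ge \delta-\eta-h/\sqrt{2\pi}$ on $[t_0,t_0+h]$, the evaluation at $s=t_0+\E\xi+h/2$ with Chebyshev applied to $\var(\xi)\le\E\xi^2$, and --- the key insight --- pinning $h$ to a constant multiple of $\sigma$ so that $q<\tfrac12$ is absorbed into the factor $2$ on $\delta^*$, rather than optimizing $h$ against the $\sigma^2/h^2$ tail, which you correctly identify as the source of the suboptimal $\sigma^{2/3}$ rate. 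This yields $\delta \le 2\delta^* + C\sigma$ with an explicit $C$.

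The one genuine flaw is the closing claim that the stated constant $C=\frac{5}{\sqrt{2\pi}}$ follows by ``routine optimization of $h/\sigma$.'' It does not. To get the coefficient $2$ on $\delta^*$ you need $1-2q\ge\tfrac12$, and the two-sided Chebyshev bound $q\le 4\sigma^2/h^2$ then forces $h\ge 4\sigma$; after rearranging, the additive term is also multiplied by $\frac{1}{1-2q}\le 2$, so your scheme gives
\[
\delta \;\le\; 2\delta^* + \frac{2(h+\sigma)}{\sqrt{2\pi}}, \qquad h \ge 4\sigma,
\]
i.e.\ a constant no better than $\frac{10}{\sqrt{2\pi}}$. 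Even your proposed shaving --- replacing the worst-case window value by its average, which reduces the window penalty to roughly $h/2+\sigma\sqrt{q}$ --- leaves you minimizing $\bigl(c_0/2+2/c_0+1\bigr)\big/\bigl(1-8/c_0^2\bigr)$ over $c_0=h/\sigma\ge 4$, whose minimum is about $5.6$, still above $5$. Reaching the literal $\frac{5}{\sqrt{2\pi}}$ needs sharper tail input than two-sided Chebyshev (e.g.\ one-sided Cantelli bounds, or exact Gaussian tails --- the relevant case here, since the paper only ever applies the lemma with $\xi$ a standard normal scaled by $n^{-1/2}$). This gap is relative to the stated constants only, not to the mathematics: your argument proves the lemma with $\frac{10}{\sqrt{2\pi}}$ in place of $\frac{5}{\sqrt{2\pi}}$, and every downstream use (e.g.\ \cref{lemma:normal_difference}, which immediately relaxes $\frac{3}{2\sqrt{\pi}}\frac{1}{\sqrt n}$ to $\frac{3}{\sqrt n}$) needs only $O(\sigma)$, so the weaker constant suffices for everything the paper does.
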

\begin{proof}
   See for example \cite{bolthausen1982} 
\end{proof}

\begin{lemma}\label{lemma:normal_difference}
Denote
$$
    \delta := \sup_t\left|\p\left(\frac{\sum X_iY_i}{\sqrt{n}}\le t\right)-\Phi(t)\right|, \quad
    \delta_{\xi} := \sup_t\left|\p\left(\frac{\xi+ \sum X_iY_i }{\sqrt{n}}\le t\right)-\p\left(\frac{\xi }{\sqrt{n}}+G\le t\right) \right|
$$
Given the same setting in  \cref{thm:rate of convergence}, and let $G,\xi$ be independent standard normal random variable. Then
\[
\delta
\le 2 \delta_{\xi} +\frac{3}{\sqrt{n}}
\]

\end{lemma}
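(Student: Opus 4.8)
The plan is to regularize the normalized sum by an independent Gaussian perturbation $\xi/\sqrt n$, pay for this regularization using the smoothing estimate \cref{lemma:Normal-variation}, and then dispose of the residual term by recognizing it as the Kolmogorov distance between two centered Gaussians with nearly equal variances.

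First I would write $W_n := \frac{1}{\sqrt n}\sum_{i=1}^n X_iY_i$, so that $\delta=\sup_t|\p(W_n\le t)-\Phi(t)|$. Since $\xi$ is standard normal and independent of the $X_i,Y_i$, the variable $\xi/\sqrt n$ is independent of $W_n$ with standard deviation $\sigma=1/\sqrt n$. The first inequality of \cref{lemma:Normal-variation}, applied with $X=W_n$ and this perturbation, gives
\[
\delta \le 2\delta^* + \frac{5}{\sqrt{2\pi}}\cdot\frac{1}{\sqrt n}, \qquad \delta^*:=\sup_t\left|\p\left(\frac{\xi+\sum_i X_iY_i}{\sqrt n}\le t\right)-\Phi(t)\right|,
\]
using that $W_n+\xi/\sqrt n=(\xi+\sum_i X_iY_i)/\sqrt n$.

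Next I insert the intermediate law $\p(\frac{\xi}{\sqrt n}+G\le t)$ and apply the triangle inequality for the supremum,
\[
\delta^* \le \delta_{\xi}+\sup_t\left|\p\left(\frac{\xi}{\sqrt n}+G\le t\right)-\Phi(t)\right|.
\]
Because $\xi$ and $G$ are independent standard normals, $\frac{\xi}{\sqrt n}+G\sim\cN(0,1+\frac1n)$, so the residual equals $\sup_t|\Phi(t/\sigma_n)-\Phi(t)|$ with $\sigma_n=\sqrt{1+1/n}$.

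The one point demanding care is this residual, and getting it right is what pins down the constant $3$. Writing it for $t>0$ as $\int_{t/\sigma_n}^{t}\phi(u)\,du\le \phi(t/\sigma_n)\,t\,(1-\sigma_n^{-1})$ and using $u\phi(u)\le(2\pi e)^{-1/2}$ together with $\sigma_n-1\le \frac{1}{2n}$ (and symmetry for $t<0$), the residual is $O(1/n)$ rather than $O(1/\sqrt n)$; the gain reflects that this is a pure variance perturbation. Combining the three displays yields $\delta\le 2\delta_{\xi}+\frac{5}{\sqrt{2\pi}}\frac{1}{\sqrt n}+\frac{C}{n}$ for an absolute constant $C$, and since $\frac{5}{\sqrt{2\pi}}\approx1.995<3$ the spare $\frac{1}{\sqrt n}$ room absorbs the $C/n$ correction for every $n\ge1$, giving the stated $\frac{3}{\sqrt n}$. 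I would flag that estimating the residual crudely by reapplying \cref{lemma:Normal-variation} to $G$ contributes an extra $\frac{3}{\sqrt\pi}\frac{1}{\sqrt n}$ and inflates the coefficient to about $3.69>3$; hence the sharp constant genuinely requires the direct $O(1/n)$ bound.
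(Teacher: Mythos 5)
Your argument is correct and shares the paper's skeleton: smooth the normalized sum by the independent perturbation $\xi/\sqrt{n}$ via \cref{lemma:Normal-variation}, insert the intermediate law $\p(\xi/\sqrt{n}+G\le t)$ by the triangle inequality, and control the residual $\eta=\sup_t\left|\p(G\le t)-\p(\xi/\sqrt{n}+G\le t)\right|$. The one genuine difference is how $\eta$ is handled. The paper disposes of it by a second application of \cref{lemma:Normal-variation} (the inequality $\delta^*\le 2\delta+\frac{3}{2\sqrt{\pi}}\sigma$ with $X=G$, so $\delta=0$), yielding $\eta\le\frac{3}{2\sqrt{\pi}}\frac{1}{\sqrt{n}}$, whereas you evaluate $\eta$ exactly as the Kolmogorov distance between $\cN(0,1)$ and $\cN(0,1+\frac1n)$ and obtain $O(\frac1n)$ via $u\phi(u)\le(2\pi e)^{-1/2}$ and $\sqrt{1+1/n}-1\le\frac{1}{2n}$. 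Your extra care is not idle: as you note, the first inequality of \cref{lemma:Normal-variation} carries the constant $\frac{5}{\sqrt{2\pi}}\approx 1.995$, yet the paper's display applies it with the constant $\frac{3}{2\sqrt{\pi}}\approx 0.846$ that belongs to the second inequality; with the lemma's actual constants the paper's chain gives $2\delta_{\xi}+\left(\frac{3}{\sqrt{\pi}}+\frac{5}{\sqrt{2\pi}}\right)\frac{1}{\sqrt{n}}\approx 2\delta_{\xi}+\frac{3.69}{\sqrt{n}}$, overshooting the stated $\frac{3}{\sqrt{n}}$, while your direct $O(\frac1n)$ bound on the pure variance perturbation lands at $2\delta_{\xi}+\left(\frac{5}{\sqrt{2\pi}}+o(1)\right)\frac{1}{\sqrt{n}}<2\delta_{\xi}+\frac{3}{\sqrt{n}}$ for every $n\ge1$. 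The distinction is immaterial downstream, since \cref{thm:rate of convergence} only uses the order $O(\frac{1}{\sqrt{n}})$, but your version is the one that honestly establishes the constant $3$ as stated.
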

\begin{proof}

    By  \cref{lemma:Normal-variation} we have
    \begin{align*}
         \eta: & =\sup_t\left|\p\left(G\le t\right) - \p\left(\frac{\xi }{\sqrt{n}}+G\le t\right) \right| \\
         & \le 2 \sup_t\left|\p\left(G\le t\right)-\p\left( G\le t\right) \right| +  \frac{3}{2\sqrt{\pi}} \sqrt{\frac{1}{n}}\\
         & = \frac{3}{2\sqrt{\pi}}\frac{1}{\sqrt{n}}
    \end{align*}
    
    Again by  \cref{lemma:Normal-variation}, we see
    \begin{align*}
        \delta 
        & \le 2 \sup_t\left|\p\left(\frac{\xi+ \sum X_iY_i }{\sqrt{n}}\le t\right)-\Phi(t) \right| 
        +  \frac{3}{2\sqrt{\pi}}\frac{1}{\sqrt{n}} \\
        &\le 2 (\delta_{\xi} + \eta) + \frac{3}{2\sqrt{\pi}}\frac{1}{\sqrt{n}}\\
        &< 2 \delta_{\xi}+ \frac{3}{\sqrt{n}} 
    \end{align*}
\end{proof}

\bigskip
Now we are ready to prove the rate of convergence in  \cref{thm:rate of convergence}.\\
\begin{proof}
Let $\{Z_i\}$ be a sequence of independent standard normal random variables which is independent from $\{X_i , Y_i\}$.
By conditioning, we can rewrite $\delta_{\xi}$ of  \cref{lemma:normal_difference}
\begin{align*}
 \delta_{\xi} & = \sup_t \left| \p\left(\frac{\xi+ \sum X_iY_i }{\sqrt{n}}\le t\right)-\p\left(\frac{\xi }{\sqrt{n}}+G\le t\right) \right| \\
& = \sup_t \left| \p\left(\frac{\xi+ \sum X_iY_i }{\sqrt{n}}\le t\right)-\p\left(\frac{\xi+\sum Z_iY_i }{\sqrt{n}}\le t\right)  + \Delta_t \right| \\
& =\sup_t \left| \E \left[\sum_{m=1}^n \Phi \left(T_m \right)-\Phi \left(T_{m-1} \right)  \right] + \Delta_t \right|
\end{align*}
where 
\begin{align*}
     \Delta_t &= \p\left(\frac{\xi+\sum Z_iY_i }{\sqrt{n}}\le t\right) -\p\left(\frac{\xi }{\sqrt{n}}+G\le t\right)\\
    T_m &=t\sqrt{n}-\sum_{i=1}^m X_iY_i -\sum_{i=m+1}^n Z_iY_i 
\end{align*}
Therefore with  \cref{lemma:part_bound_1} controlling the part of telescopic sum and  \cref{lemma:part_bound_2} controlling $\sup_t|\Delta_t|$ (which we will prove later in \cref{subsection: lemma}), we see,
\[
\sup_t\left| \p\left(\frac{\xi+ \sum X_iY_i }{\sqrt{n}}\le t\right)-\p\left(\frac{\xi }{\sqrt{n}}+G\le t\right)  \right| \le O(\varepsilon_n \vee \frac{1}{\sqrt{n}} )
\]
Then by  \cref{lemma:normal_difference}, we conclude the desired result
\[
\sup_t\left|\p\left(\frac{\sum X_iY_i}{\sqrt{n}}\le t\right)-\Phi(t)\right| \le O(\varepsilon_n \vee \frac{1}{\sqrt{n}} )
\]
\end{proof}

\begin{remark}\label{Nonequi}
If we let $Y_k=1$ for all $k$, then we recover the rate of convergence $O(\frac{1}{\sqrt{n}})$ for a martingale difference sequence $\{X_k\}$. This is not contradicting the Martingale difference CLT which has a rate $O(\frac{\log n}{\sqrt{n}})$, see \cite{bolthausen1982}.  Martingale CLT is derived under a slightly  weaker condition on variance, which only requires $\frac{1}{n}\sum_k \E [X_k^2|\cF_{k-1}] \to 1 $ instead of our condition that $\E [X_k^2|\cF_{k-1}] $  to be constant $1$ for all $k$.

\end{remark}


\subsection{Proof of  \cref{lemma:part_bound_1} and \cref{lemma:part_bound_2}} \label{subsection: lemma}
\begin{lemma}\label{lemma:part_bound_1}
If $\E X_k^3<A<\infty, \E Y_k^3 <A< \infty , \forall k$ then there is a constant $c$
\begin{align}\label{eqn:Lindeberg_sum_rate}
    \sup_t \left| \E \left[\sum_{m=1}^n \Phi \left(T_m \right)-\Phi \left(T_{m-1} \right)  \right] \right| 
    \le \frac{c}{\sqrt{n}}
\end{align}
\end{lemma}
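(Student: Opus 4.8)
The plan is to carry out the Lindeberg swap term by term, isolating for each $m$ the common summand
\[
W_m = t\sqrt{n}-\sum_{i=1}^{m-1}X_iY_i-\sum_{i=m+1}^{n}Z_iY_i,
\]
which does not involve the index $m$, so that $T_m=W_m-X_mY_m$ and $T_{m-1}=W_m-Z_mY_m$. Writing $\cG_{m-1}=\sigma(\cF_{m-1},\{Y_i\}_i,\{Z_i\}_{i>m})$, both $W_m$ and $Y_m$ are $\cG_{m-1}$-measurable, while $X_m$ (given $\cF_{m-1}$) and $Z_m$ are independent of $\cG_{m-1}$ with matching first two moments. I would Taylor-expand $\Phi$ around $W_m$ to third order,
\[
\Phi(T_m)-\Phi(T_{m-1})=-\Phi'(W_m)(X_m-Z_m)Y_m+\tfrac12\Phi''(W_m)(X_m^2-Z_m^2)Y_m^2+\mathcal R_m,
\]
where $\mathcal R_m$ collects the two third-order remainders.

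Conditioning on $\cG_{m-1}$ and using that $Y_m$ is independent of $X_m,Z_m$, the first-order term vanishes because $\E[X_m\mid\cF_{m-1}]=0=\E Z_m$, and the second-order term vanishes because $\E[X_m^2\mid\cF_{m-1}]=1=\E Z_m^2$ holds \emph{exactly}. This term-by-term exactness, rather than the merely averaged variance condition of the martingale CLT, is what I expect to be responsible for the clean $1/\sqrt n$ rate (compare \cref{Nonequi}). Since $\E Z_m^3=0$, the Gaussian contribution to $\mathcal R_m$ drops out as well, leaving only the remainder generated by $X_m$, whose size is governed by $\E[X_m^3\mid\cF_{m-1}]\le A$ and $\E|Y_m|^3\le A$.

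The remaining and genuinely delicate step is to prove $\sup_t\bigl|\sum_{m=1}^n\E\mathcal R_m\bigr|\le c/\sqrt n$. Bounding $\Phi'''$ by its supremum gives only $O(1)$ per term, and even using that $W_m$ has a density of order $(n-m)^{-1/2}$ — so that $\E|\Phi'''(W_m)|=O((n-m)^{-1/2})$ — sums to a useless $O(\sqrt n)$. The extra decay has to come from the smoothing afforded by the un-swapped terms together with $\int_{\R}\Phi'''=0$: conditioning on $(\cF_{m-1},\{Y_i\})$, the Gaussian block $\sum_{i>m}Z_iY_i$ convolves $\Phi'''=\varphi''$ into the second derivative of a Gaussian density of variance $1+\sum_{i>m}Y_i^2\approx n-m$, which is $O((n-m)^{-3/2})$. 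For $m\le n/2$ this already gives $O(n^{-3/2})$ per term; for $m$ near $n$, where too few Gaussian replacements survive, I would instead exploit that the large non-Gaussian block $\sum_{i<m}X_iY_i$ is by then spread over a scale $\sqrt n$, so that the vanishing integral of $\Phi'''$ against its law again produces a factor $O(n^{-3/2})$. Summing the $n$ per-term bounds then yields $O(1/\sqrt n)$. I expect the main obstacle to be exactly these endpoint terms: supplying the required regularity of the law of $\sum_{i<m}X_iY_i$ from only the third-moment hypotheses, and doing so uniformly in $m$, is the crux, and it is here that the exact conditional-variance assumption $\E[X_k^2\mid\cF_{k-1}]=1$ — which already made the second-order cancellation exact — pays off by removing the logarithmic loss present in the general martingale case.
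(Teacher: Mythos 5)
You follow the paper's proof exactly through the first two steps: your $W_m$ is the paper's common point $U_k$ (the paper even writes it with sign typos, $U_k=T_k-X_kY_k$, where your version $T_m=W_m-X_mY_m$, $T_{m-1}=W_m-Z_mY_m$ is the correct one), and the exact first- and second-order cancellations via $\E[X_m\mid\cF_{m-1}]=0$, $\E[X_m^2\mid\cF_{m-1}]=1$ and the independence of $\{Y_i\},\{Z_i\}$ from $\{X_i\}$ are identical. Where you part ways is the remainder. The paper's proof is three lines from here: it writes the increments as $\frac{1}{\sqrt n}X_kY_k$, bounds the third-order term by $O\bigl(n^{-3/2}(|X_k^3|+|Z_k^3|)|Y_k^3|\sup_x|\Phi'''(x)|\bigr)$ using only $\sup_x|\Phi'''(x)|<\tfrac25$ and the third-moment hypotheses, and sums the $n$ terms to get $c/\sqrt n$ --- no smoothing from the un-swapped block, no anti-concentration, no endpoint analysis. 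Your diagnosis of why this cannot be taken at face value is, as far as I can tell, correct: with $T_m=t\sqrt n-\sum_{i\le m}X_iY_i-\sum_{i>m}Z_iY_i$ (the definition forced by the identity $\E\,\Phi(T_n)=\p\bigl((\xi+\sum X_iY_i)/\sqrt n\le t\bigr)$ in the proof of \cref{thm:rate of convergence}), the per-swap increment is $X_mY_m$, not $X_mY_m/\sqrt n$; the $1/\sqrt n$ factors in the paper's expansion are imported from the proof of \cref{thm:product_CLT}, where the test function is applied to the \emph{normalized} sum, and they are not justified here. Equivalently, in the normalized picture the mollifier coming from $\xi/\sqrt n$ has width $n^{-1/2}$, so its third derivative is of size $n^{3/2}$ and exactly cancels the $n^{-3/2}$ from the cubed increment, leaving $O(1)$ per term --- precisely the obstruction you articulate.

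However, your repair is a plan, not a proof, and as sketched it cannot be completed under the stated hypotheses. The conditioning on $\{Z_i\}_{i>m}$ does give an effective mollifier of variance $1+\sum_{i>m}Y_i^2$ and handles $m\le n/2$, but the endpoint terms require regularity of the law of $\sum_{i<m}X_iY_i$ that bounded third moments do not supply: for Rademacher $X_i$ and $Y_i\equiv 1$ that law is lattice and has no density at all, so ``the vanishing integral of $\Phi'''$ against its law'' cannot be run as stated and would need local-CLT or characteristic-function input. The crude version of your smoothing bound reproduces exactly Bolthausen's $O(\log n/\sqrt n)$, and the mechanism you hope removes the logarithm (a second-order cancellation using two derivatives of the density of the $X$-block at scale $\sqrt n$, enabled by $\E[X_k^2\mid\cF_{k-1}]=1$, cf.\ \cref{Nonequi}) is asserted rather than established --- and it is precisely where the lattice obstruction bites. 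Two smaller points: the paper assumes only unconditional bounds $\E|X_k|^3\le A$, not $\E[|X_m|^3\mid\cF_{m-1}]\le A$ as you invoke; and with a Lagrange-form remainder the Gaussian third-order contribution does not ``drop out'' from $\E Z_m^3=0$, since $\Phi'''$ is evaluated at a random intermediate point correlated with $Z_m$. Net assessment: you have correctly identified a genuine gap in the paper's own one-line remainder bound, but your proposal leaves that same gap open.
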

\begin{proof}
Let $U_k=T_k-X_kY_k=T_{k-1}-Z_kY_k$, then 
		\[
		\Phi(T_k) - \Phi(U_k)=\Phi'(U_k) \frac{1}{\sqrt{n}} X_kY_k +\frac{1}{2}\Phi''(U_k)\frac{1}{n} X_k^2 Y_k^2 +O(n^{-\frac{3}{2}} |X_k^3Y_k^3| \sup_x \Phi'''(x))
		\] 
		\[
		\Phi(T_{k-1}) - \Phi(U_k)=\Phi'(U_k) \frac{1}{\sqrt{n}} Z_k Y_k +\frac{1}{2}\Phi''(U_k)\frac{1}{n} Z_k^2 Y_k^2 +O(n^{-\frac{3}{2}} |Z_k^3 Y_k^3| \sup_x \Phi'''(x))
		\] 
Similar arguments from the CLT proof shows the first two terms match. Therefore 
\begin{align*}
    \left| \E \left[\sum_{m=1}^n \Phi \left(T_m \right)-\Phi \left(T_{m-1} \right)  \right] \right|  
    & \le \E \left[\sum_{m=1}^n O(n^{-\frac{3}{2}} (|X_k^3|+|Z_k^3|)|Y_k^3| \sup_x \Phi'''(x)) \right] \\
    &\le \frac{c}{\sqrt{n}}
\end{align*}
Note $\Phi'''(x)=\frac{x^2-1}{\sqrt{2\pi}} e^{-x^2/2}$ and $|\sup_x \Phi'''(x) |<\frac{2}{5}$.

\end{proof}

\begin{lemma}\label{lemma:part_bound_2}
If condition \cref{eqn:assumption_rate}
\begin{align*}
    \E \left[ 1 \wedge \left|\sqrt{\frac{\sum Y_k^2}{n}}-1 \right|  \right] \le O(\varepsilon_n)
\end{align*}
 is satisfied. Then 
\[
\sup_t |\Delta_t| \le O(\varepsilon_n \vee \frac{1}{\sqrt{n}} )
\]
\end{lemma}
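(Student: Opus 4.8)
The plan is to condition on the sequence $\{Y_k\}$ and exploit that, given $\{Y_k\}$, both random variables appearing in $\Delta_t$ are centered Gaussians whose variances can be read off explicitly. Writing $\sigma:=\sqrt{\tfrac1n\sum_{k=1}^n Y_k^2}$, the independence of $\xi$ and $\{Z_i\}$ from $\{Y_i\}$ gives that, conditionally on $\{Y_k\}$, we have $\frac{\xi+\sum Z_iY_i}{\sqrt n}\sim\cN(0,a^2)$ with $a:=\sqrt{\tfrac1n+\sigma^2}$, while $\frac{\xi}{\sqrt n}+G\sim\cN(0,b^2)$ with the deterministic $b:=\sqrt{\tfrac1n+1}$. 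Taking expectations over $\{Y_k\}$ then recasts the target as
\[
\Delta_t=\E\!\left[\Phi\!\left(\tfrac{t}{a}\right)-\Phi\!\left(\tfrac{t}{b}\right)\right].
\]

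The heart of the argument is a bound on $|\Phi(t/a)-\Phi(t/b)|$ that is uniform in $t$ and linear in $|a-b|$. I would view $s\mapsto\Phi(t/s)$ on $s>0$ and differentiate: $\frac{d}{ds}\Phi(t/s)=-\Phi'(t/s)\,t/s^2$, so that $\big|\frac{d}{ds}\Phi(t/s)\big|=\frac{1}{s}\,\big|\tfrac ts\big|\,\Phi'(\tfrac ts)\le\frac{C_0}{s}$, where $C_0:=\sup_u |u|\Phi'(u)=\frac{1}{\sqrt{2\pi e}}$ is a universal constant not depending on $t$. The mean value theorem then yields, for every $t$,
\[
\Big|\Phi\!\left(\tfrac{t}{a}\right)-\Phi\!\left(\tfrac{t}{b}\right)\Big|\le\frac{C_0}{\min(a,b)}\,|a-b|,
\]
and of course the left-hand side is also trivially $\le 1$.

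Next I would convert $|a-b|$ into the quantity the hypothesis controls. Since $a\ge\sigma$ and $b\ge 1$, we have $a+b\ge\sigma+1$, hence
\[
|a-b|=\frac{|a^2-b^2|}{a+b}=\frac{(\sigma+1)|\sigma-1|}{a+b}\le|\sigma-1|.
\]
It then remains to take expectations and split on the event $\{|\sigma-1|\le\tfrac12\}$. On this good event $\sigma\ge\tfrac12$ forces $\min(a,b)\ge\tfrac12$, so the integrand is at most $2C_0|\sigma-1|=2C_0\,(1\wedge|\sigma-1|)$; on the complement I bound the integrand by $1$ and use $\1_{\{|\sigma-1|>1/2\}}\le 2\,(1\wedge|\sigma-1|)$. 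Both pieces are therefore dominated by a constant times $\E[\,1\wedge|\sigma-1|\,]$, which by \cref{eqn:assumption_rate} is $O(\varepsilon_n)$. As every estimate here is uniform in $t$, this gives $\sup_t|\Delta_t|\le O(\varepsilon_n)\le O(\varepsilon_n\vee\tfrac1{\sqrt n})$.

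I expect the main obstacle to be the uniform-in-$t$ derivative estimate: the naive bound $|\Phi(t/a)-\Phi(t/b)|\le\|\Phi'\|_\infty\,|t|\,|1/a-1/b|$ blows up as $|t|\to\infty$, and the key point is to absorb the $t$-dependence into the bounded quantity $|u|\Phi'(u)$ so that the constant $C_0$ is both finite and $t$-free. The secondary subtlety is the small-$\sigma$ regime, where $a$, and hence $\min(a,b)$, can be as small as $1/\sqrt n$; this is precisely why the hypothesis is stated with the truncation $1\wedge|\cdot|$, and the event split above is what lets the crude bound $\le 1$ absorb that range.
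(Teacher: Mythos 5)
Your proof is correct, and it takes a genuinely cleaner route than the paper at one point. The paper first strips the smoothing variable $\xi/\sqrt{n}$ out of $\Delta_t$ via the variation estimate of \cref{lemma:Normal-variation} (as in \cref{lemma:normal_difference}), paying a $c_0/\sqrt{n}$ penalty, and only then conditions on $\{Y_k\}$ to compare $\E\,\Phi(t/S_n)$ with $\Phi(t)$ through the explicit integral $h(t)=\bigl|\int_t^{t/S_n}\frac{1}{\sqrt{2\pi}}e^{-x^2/2}\,dx\bigr|$, handled by a three-case analysis in $S_n$. You instead observe that $\xi$ is itself Gaussian and independent of everything else, so conditionally on $\{Y_k\}$ it can be absorbed \emph{exactly} into the variance, reducing $\Delta_t$ to $\E[\Phi(t/a)-\Phi(t/b)]$ with $a=\sqrt{1/n+\sigma^2}$, $b=\sqrt{1/n+1}$; this makes the variation lemma unnecessary for this step and yields the strictly stronger conclusion $\sup_t|\Delta_t|\le O(\varepsilon_n)$ with no $1/\sqrt{n}$ term (the stated bound follows a fortiori). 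The analytic kernel is the same in both arguments --- the uniform-in-$t$ control comes from the boundedness of $|u|\Phi'(u)$, which is exactly the paper's use of $\sup_x\frac{1}{\sqrt{2\pi}}|xe^{-x^2/2}|<\frac12$, and your split on $\{|\sigma-1|\le\frac12\}$ with the trivial bound $1$ on the complement mirrors the paper's treatment of the event $\{S_n<\frac12\}$ --- but your mean-value-theorem packaging in the scale parameter, together with the inequalities $a\ge\sigma$, $b\ge 1$, $|a-b|\le|\sigma-1|$, replaces the paper's case-by-case bounds on $h(t)$ with a single uniform estimate. All steps check out: the conditional distributions are as you claim, the MVT applies since $a,b\ge 1/\sqrt{n}>0$, and the domination $\mathds{1}_{\{|\sigma-1|>1/2\}}\le 2\,(1\wedge|\sigma-1|)$ is valid, so both pieces are controlled by $\E[1\wedge|\sigma-1|]=O(\varepsilon_n)$ uniformly in $t$.
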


\begin{proof}
With similar argument in  \cref{lemma:normal_difference}, we can removing the same variation term, normal random variable $\frac{\xi }{\sqrt{n}}$ in $\Delta_t$. So for some constant $c_0$,
\[
\sup_t |\Delta_t|\le 2\sup_t \left|\p\left(\frac{\sum Z_iY_i }{\sqrt{n}}\le t\right) -\p\left( G\le t\right) \right| +\frac{c_0}{\sqrt{n}}
\]

\begin{align*}
    \sup_t \left| \p\left(\frac{\sum Z_iY_i }{\sqrt{n}}\le t\right)-\p(G\le t) \right|
    &= \sup_t \E \left| \p (G \le t\sqrt{\frac{n}{\sum Y_i^2}} ) -\p(G\le t) \right| \\
    &= \sup_t \E \left| \int_{t}^{t\sqrt{\frac{n}{\sum Y_i^2}} } \frac{1}{\sqrt{2\pi}} e^{-\frac{x^2}{2}} dx \right|  \\
  &: = \sup_t \E h(t) \\
  & \le \E \sup_t h(t) 
\end{align*}
where we denote $S_n=\sqrt{\frac{\sum Y_i^2}{n}}$,  $h(t)=\left| \int_{t}^{t /S_n }  \frac{1}{\sqrt{2\pi}} e^{-\frac{x^2}{2}} dx \right|$. 

Notice $0<h(t)<1$ and $h(t)<|t-t/S_n|\frac{1}{\sqrt{2\pi}} e^{-\frac{\min({t}^2, {t}^2/ S_n^2)}{2}} $. So
\begin{align*}
    \sup_t |h(t)| & \le 1 \wedge \left[ |t/S_n - t| \frac{1}{\sqrt{2\pi}} e^{-\frac{\min({t}^2, {t}^2/S_n^2)}{2}} \right] 
\end{align*}

Notice the fact  $\sup_x \frac{1}{\sqrt{2\pi}}|x e^{-\frac{x^2}{2}}|< \frac{1}{2}$. When $S_n>1$, $\min({t}^2, {t}^2/S_n^2) = t^2/S_n^2$, we conclude 
$$  \sup_t |h(t)| \le 1 \wedge  \frac{1}{2}| 1-S_n| \le 1 \wedge  | 1-S_n| $$
When $\frac{1}{2}< S_n <1$, we have $4S_n^2>1$. Then $\min({t}^2, {t}^2/S_n^2) \ge \frac{t^2}{4S_n^2}$. We see 
\begin{align*}
    \sup_t |h(t)| & \le 1 \wedge \left[ |2 - 2S_n| \frac{t}{2S_n} \frac{1}{\sqrt{2\pi}} e^{-\frac{\frac{t^2}{4S_n^2}}{2}} \right]  \\
    & \le 1 \wedge \frac{1}{2}|2 - 2S_n| \\
    & = 1 \wedge  | 1-S_n|
\end{align*}
When $ S_n<\frac{1}{2}$, we use the bound $\sup_t |h(t)|<1$. And 
\begin{align*}
    \p(S_n <\frac{1}{2}) \le 2 \E[1\wedge |1-S_n|, S_n <\frac{1}{2}] 
\end{align*}

 Combining condition \cref{eqn:assumption_rate}, we conclude 
\begin{align*}
   \E [\sup_t h(t)] 
   & \le \E \left[ 1 \wedge | S_n - 1|, S_n >\frac{1}{2} \right] + \E \left[ 1 , S_n <\frac{1}{2} \right] \\
   & \le   O(\varepsilon_n) + \p\left( S_n <\frac{1}{2} \right) \\
   & \le O(\varepsilon_n) + 2 \E[1\wedge |1-S_n|, S_n <\frac{1}{2}] \\
   & \le O(\varepsilon_n)
\end{align*}

Then we conclude 
\begin{align*}
    \sup_t |\Delta_t|\le O( \varepsilon_n \vee \frac{1}{\sqrt{n}} )
\end{align*}

\end{proof}


\subsection{Discussion on the assumptions }

A natural question is whether the condition \cref{eqn:assumption_rate} is necessary for  \cref{thm:rate of convergence}.  We will first show the condition \cref{eqn:assumption_rate} for  \cref{lemma:part_bound_2} is sharp by obtaining a lower bound for $\sup_t \E h(t)$.  This implies the technique we used in proving  \cref{thm:rate of convergence} is delicate enough to squeeze out any unnecessary relaxation. 
\begin{proposition}\label{prop:assumption_sharp_lowerbound}
        \begin{align}
        \sup_t \left| \p\left(\frac{\sum Z_iY_i }{\sqrt{n}}\le t\right)-\p(G\le t) \right|: = \sup_t \E h(t) \ge O(\E \left[ 1 \wedge \left|\sqrt{\frac{\sum Y_k^2}{n}}-1 \right| \right] )
        \end{align}
\end{proposition}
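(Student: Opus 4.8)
The plan is to reduce the supremum to a single well-chosen abscissa and then prove a pointwise comparison. Recall from the proof of \cref{lemma:part_bound_2} that, with $S_n=\sqrt{\frac1n\sum_k Y_k^2}$ and writing $\phi(x)=\frac1{\sqrt{2\pi}}e^{-x^2/2}$, the quantity inside the expectation is $h(t)=\left|\int_t^{t/S_n}\phi(x)\,dx\right|=\left|\Phi(t/S_n)-\Phi(t)\right|$. Since $\sup_t\E h(t)\ge \E h(t_0)$ for every fixed $t_0$, it suffices to exhibit one $t_0$ and a constant $c>0$ for which the pointwise bound
\[
h(t_0)=\bigl|\Phi(t_0/S_n)-\Phi(t_0)\bigr|\ \ge\ c\,\bigl(1\wedge|S_n-1|\bigr)
\]
holds for every realization of $S_n\ge 0$. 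Taking expectations then gives $\sup_t\E h(t)\ge \E h(t_0)\ge c\,\E\!\left[1\wedge|S_n-1|\right]$, which is exactly the asserted lower bound (the notation $\ge O(\cdot)$ being read as ``bounded below by a fixed positive multiple of'').

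To establish the pointwise inequality I would fix $t_0=1$ and set $g(s):=\left|\Phi(1/s)-\Phi(1)\right|$, so that $h(1)=g(S_n)$, and show $g(s)\ge c\,(1\wedge|s-1|)$ for all $s>0$ by splitting on the size of $|s-1|$. In the near regime $|s-1|\le \tfrac12$ (so $s\in[\tfrac12,\tfrac32]$ and $1/s\in[\tfrac23,2]$) the mean value theorem gives $g(s)=\phi(\xi)\,|1/s-1|$ for some $\xi$ between $1/s$ and $1$; bounding $\phi(\xi)\ge\phi(2)$ and $|1/s-1|=|1-s|/s\ge \tfrac23|1-s|$ yields $g(s)\ge \tfrac23\phi(2)\,|1-s|=\tfrac23\phi(2)\,(1\wedge|s-1|)$. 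In the far regime $|s-1|>\tfrac12$ we have $1\wedge|s-1|\le 1$, so it is enough to bound $g$ below by a constant: for $s<\tfrac12$ we have $1/s>2$, whence $g(s)=\Phi(1/s)-\Phi(1)\ge \Phi(2)-\Phi(1)$, and for $s>\tfrac32$ we have $1/s<\tfrac23$, whence $g(s)=\Phi(1)-\Phi(1/s)\ge\Phi(1)-\Phi(\tfrac23)$, both strictly positive constants. Taking $c$ to be the minimum of the three numerical constants produced above completes the pointwise bound, and hence the proposition.

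The genuine content, and the only step requiring care, is the near regime, where both $g(s)$ and $1\wedge|s-1|$ vanish at $s=1$ and one must check that their ratio stays bounded away from zero; this is precisely what the mean value theorem together with the uniform lower bound $\phi(\xi)\ge\phi(2)$ on the interval $[\tfrac23,2]$ supplies, the limiting ratio as $s\to1$ being $\phi(1)=\frac1{\sqrt{2\pi}}e^{-1/2}>0$. Everything else is an elementary splitting argument. The decisive feature is that $c$ depends only on fixed numerical constants, not on $n$ or on the law of $\{Y_k\}$, so the lower bound is uniform in $n$ and matches, up to a constant, the upper bound of \cref{lemma:part_bound_2}. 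This shows the convergence control in \cref{eqn:assumption_rate} cannot be relaxed within this method.
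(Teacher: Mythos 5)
Your proof is correct and takes essentially the same route as the paper: both fix $t=1$ and establish the pointwise bound $h(1)\ge c\,\bigl(1\wedge|S_n-1|\bigr)$ by a case split on the size of $S_n$, then take expectations. The only cosmetic differences are that you split at $S_n\in[\tfrac12,\tfrac32]$ and invoke the mean value theorem, while the paper splits at $S_n=\tfrac12,1,2$ and bounds the Gaussian integrand directly below by $\frac{1}{\sqrt{2\pi}}e^{-2}$ --- the same constant that drives your estimate.
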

\begin{proof}
Take $t=1$ we find 
\begin{align*}
    \E h(1) & = \E \left| \int_{1}^{1 /S_n }  \frac{1}{\sqrt{2\pi}} e^{-\frac{x^2}{2}} dx \right| \\
    & \ge  c \; \E \left[  \int_{1}^{1 /S_n } dx, \frac{1}{S_n} \le 2 \right] +  \; \E \left[\int_1^2  \frac{1}{\sqrt{2\pi}} e^{-\frac{x^2}{2}} dx , \frac{1}{S_n} > 2 \right] \\
    & \ge  c \; \E \left[|1- \frac{1}{S_n}|, S_n \ge \frac{1}{2} \right] + c \; \E \left[1,  S_n < \frac{1}{2}  \right] 
\end{align*}
where $c= \frac{1}{\sqrt{2\pi}} e^{-\frac{2^2}{2}}\ge \frac{1}{20} $.

We will further separate $ S_n \ge \frac{1}{2} $ into three events. 
\begin{align*}
  \frac{1}{2} \le S_n\le 1 
    & :\quad
    \frac{1}{S_n} -1 \ge 1- S_n \ge 0\\
  1< S_n < 2 
    & :\quad
    1-\frac{1}{S_n}  \ge \frac{1}{2} ( S_n -1)\ge 0 \\
    S_n \ge 2 
    & :\quad
    1-\frac{1}{S_n}  \ge \frac{1}{2} \\
\end{align*}
So overall on the event $ S_n \ge \frac{1}{2} $, we have 
\[
|\frac{1}{S_n} -1 | \ge \frac{1}{2} [ 1 \wedge |S_n -1| ]
\]
Combining all together, 
\begin{align*}
    \sup_t \E h(t) \ge \E h(1) 
    & \ge \frac{1}{40} \E \left[ 1 \wedge |S_n -1|, S_n \ge \frac{1}{2} \right] + \frac{1}{20}  \E \left[1,  S_n < \frac{1}{2}  \right] \\
   & \ge  \frac{1}{40}  \E \left[ 1 \wedge |S_n -1| \right]
\end{align*}
\end{proof}

\bigskip
Next we will use i.i.d. $X_i, Y_i$ as an example to show the rate of convergence obtained from  \cref{thm:rate of convergence} is the same as Berry-Esseen in classical CLT. This implies the condition \cref{eqn:assumption_rate} is sharp for this specific example. Note this does not imply condition \cref{eqn:assumption_rate} is sharp in general. However, any nontrivial improvement would require more restrictive assumptions.
\begin{proposition}[] \label{prop:assumption is sharp}
In \cref{thm:rate of convergence}, condition \cref{eqn:assumption_rate} is sharp if $X_i, Y_i$ are i.i.d. sequences with mean zero and variance one.
\end{proposition}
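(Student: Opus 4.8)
The plan is to instantiate \cref{thm:rate of convergence} for i.i.d. inputs, check that condition \cref{eqn:assumption_rate} then holds with $\varepsilon_n = O(1/\sqrt{n})$, and compare the resulting bound with the classical Berry-Esseen rate for the i.i.d. sequence $\{X_iY_i\}$, which is known to be order-optimal. First I would control $\varepsilon_n$. Writing $\bar W_n = \frac{1}{n}\sum_{k=1}^n Y_k^2$, the $Y_k^2$ are i.i.d. with mean $1$, and the elementary bound $|\sqrt{x}-1|\le |x-1|$ for $x \ge 0$ (because $\sqrt{x}+1 \ge 1$) gives
\[
\E\left[1 \wedge \left|\sqrt{\bar W_n}-1\right|\right] \le \E\left|\bar W_n - 1\right| \le \sqrt{\var(\bar W_n)} = \sqrt{\frac{\E Y_1^4 - 1}{n}} = O\left(\frac{1}{\sqrt n}\right),
\]
provided $Y$ has a finite fourth moment (as in the random-projection application). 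Hence condition \cref{eqn:assumption_rate} is met with $\varepsilon_n = O(1/\sqrt n)$, and \cref{thm:rate of convergence} returns $\sup_t|\p(\frac{1}{\sqrt n}\sum_k X_kY_k < t) - \p(G<t)| \le O(1/\sqrt n \vee \varepsilon_n) = O(1/\sqrt n)$.

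Next I would observe that, since $\{X_i\}$ and $\{Y_i\}$ are i.i.d. and mutually independent, the products $\{X_iY_i\}$ are themselves i.i.d. with $\E X_iY_i = 0$, $\E (X_iY_i)^2 = \E X_i^2\, \E Y_i^2 = 1$ and $\E |X_iY_i|^3 = \E|X_i|^3\, \E|Y_i|^3 < \infty$. The classical Berry-Esseen theorem therefore applies directly to $\frac{1}{\sqrt n}\sum_i X_iY_i$ and produces exactly the rate $O(1/\sqrt n)$. Thus the bound extracted from \cref{thm:rate of convergence} coincides in order with the one furnished by the classical i.i.d. theory, i.e. no rate is lost.

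To conclude sharpness I would exhibit an i.i.d. instance that attains $\Theta(1/\sqrt n)$, so that the rate cannot be improved and hence $\varepsilon_n$ cannot be taken asymptotically smaller. Taking $X_i, Y_i$ to be Rademacher ($\pm 1$), the normalized sum $\frac{1}{\sqrt n}\sum_i X_iY_i$ is a lattice walk whose distribution function has jumps of size $\Theta(1/\sqrt n)$ at the lattice points; comparing this step function with the continuous $\Phi$ forces $\sup_t|\p(\cdot < t) - \Phi(t)| \ge c/\sqrt n$. This is precisely the standard optimality example for Berry-Esseen. Together with \cref{prop:assumption_sharp_lowerbound}, which shows that for a non-degenerate $Y$ the Gaussian-replaced error $\sup_t \E h(t)$ is bounded below by a multiple of $\E[1 \wedge |\sqrt{\bar W_n}-1|] = \Theta(1/\sqrt n)$, this confirms that the $O(1/\sqrt n \vee \varepsilon_n)$ rate of \cref{thm:rate of convergence} is attained for i.i.d. inputs and that condition \cref{eqn:assumption_rate} is therefore sharp for this example.

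The step I expect to be the main obstacle is the moment bookkeeping hidden in the first display. Pinning $\varepsilon_n$ down to $O(1/\sqrt n)$, rather than the weaker $O(n^{-1/3})$, genuinely requires a finite second moment of $Y_k^2$, i.e. a finite fourth moment of $Y$; with only the third moments guaranteed in \cref{thm:product_CLT}, $\bar W_n$ has merely $3/2$ finite moments and the von Bahr-Esseen estimate degrades the rate to $n^{-1/3}$. The second delicate point is the lower bound: since the product's third moment $\E X_i^3\,\E Y_i^3$ may vanish, one cannot read a $1/\sqrt n$ floor off an Edgeworth skewness term and must instead appeal to the discreteness of a lattice example to force the $\Theta(1/\sqrt n)$ gap.
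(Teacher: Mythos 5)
Your proof is correct and follows the same overall strategy as the paper: instantiate \cref{thm:rate of convergence} for i.i.d.\ inputs, verify that condition \cref{eqn:assumption_rate} holds with $\varepsilon_n = O(1/\sqrt n)$, and observe that the resulting bound coincides with the classical Berry--Esseen rate for the i.i.d.\ products $X_iY_i$. The execution differs at two points, and in both your version is the stronger one. First, the paper derives $\varepsilon_n$ by showing $\sqrt n\bigl(\sqrt{\tfrac1n\sum Y_i^2}-1\bigr) \to \cN(0,\tfrac14)$ via the CLT and Slutsky, and then reads off $\E\bigl[1\wedge\bigl|\sqrt{\tfrac1n\sum Y_i^2}-1\bigr|\bigr]=O(1/\sqrt n)$; strictly speaking, convergence in distribution alone does not yield this expectation bound at rate $1/\sqrt n$ without an $L^1$ (uniform integrability) control of the normalized sequence. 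Your route --- the elementary inequality $|\sqrt x -1|\le |x-1|$ followed by $\E|\bar W_n - 1| \le \sqrt{\var(\bar W_n)} = O(1/\sqrt n)$ --- supplies exactly that control and closes this small gap, at the explicit (and correctly flagged) price of a finite fourth moment of $Y$, a hypothesis the paper also uses implicitly (its ``e.g.\ standard normal'' example satisfies it, and the CLT for $Y_i^2$ already needs $\var(Y_1^2)<\infty$). Second, the paper's notion of sharpness is simply that the theorem reproduces the Berry--Esseen order, leaving the order-optimality of $1/\sqrt n$ to the reader; your Rademacher lattice example and the appeal to \cref{prop:assumption_sharp_lowerbound} make the matching lower bound explicit, though note that in the Rademacher case $Y_i^2\equiv 1$ so $\varepsilon_n = 0$ and the lattice example certifies only the $1/\sqrt n$ component, while \cref{prop:assumption_sharp_lowerbound} lower-bounds the Gaussian-replaced intermediate quantity rather than the final distance for $\sum X_iY_i$ itself --- the same interpretive looseness present in the paper's own discussion, so you lose nothing relative to it.
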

\begin{proof}
    Let   $\{X_i\}, \{ Y_i\}$ be i.i.d. random variables with mean zero, variance one (e.g. standard normal). Then by the classical CLT and Berry-Esseen we know 
    \[
	\p(\frac{1}{\sqrt{n}}\sum_{k=1}^n X_k Y_k<t) =\p(G<t)  +O(\frac{1}{\sqrt{n}} ) \quad \forall t\in \R 
	\]

Let us derive the same result from \cref{thm:rate of convergence}. For i.i.d. $Y_i$ mean zero and variance one, since $\sqrt{n}(\sum Y_i^2/n-1) \to \cN(0,1)$ and $(\sqrt{\sum Y_i^2/n}+1) \to 2$ in probability and we can apply Slutsky's theorem, $$\sqrt{n}\left(\sqrt{\frac{\sum Y_i^2}{n}}-1 \right) = \sqrt{n}\frac{(\sum Y_i^2/n-1)}{(\sqrt{\sum Y_i^2/n}+1)} \to \cN(0,\frac{1}{4})$$ 
 Therefore condition \cref{eqn:assumption_rate} is satisfied with
\begin{align*}
    \E \left( 1 \wedge \left| \sqrt{\frac{\sum Y_i^2}{n}}-1 \right|  \right) 
    & = O(\frac{1}{\sqrt{n}}) 
\end{align*}

Then \cref{thm:rate of convergence} gives the same conclusion as Berry-Esseen.
\end{proof}

\bigskip
\bigskip

A more intuitive control of the LLN of $Y_k^2$ would be controlling the tail probability directly, which will not be sharp. 
\begin{proposition}\label{prop:assumption_rate_tail_prob}
In \cref{thm:rate of convergence}, condition \cref{eqn:assumption_rate} can be replaced by
        \begin{align}\label{eqn:assumption_rate_p}
        \p \left( \left|\sqrt{\frac{\sum Y_k^2}{n}}-1 \right| > O(\varepsilon_n) \right) \le O(\varepsilon_n)
        \end{align}
where $\varepsilon_n \to 0$.  This condition is stronger than \cref{eqn:assumption_rate}.  In other words, it is sufficient for \cref{thm:rate of convergence} but not necessary.
\end{proposition}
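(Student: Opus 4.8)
The plan is to prove the two halves of the statement separately: first that \eqref{eqn:assumption_rate_p} implies \eqref{eqn:assumption_rate} (so it is sufficient for \cref{thm:rate of convergence}), and then that the converse implication fails (so it is not necessary). Throughout I would abbreviate $W_n := \left| \sqrt{\frac{1}{n}\sum_{k=1}^n Y_k^2} - 1 \right|$, so that \eqref{eqn:assumption_rate} reads $\E[1 \wedge W_n] = O(\varepsilon_n)$ while \eqref{eqn:assumption_rate_p} reads $\p(W_n > c_1 \varepsilon_n) \le c_2 \varepsilon_n$ for suitable fixed constants $c_1, c_2$.

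For the sufficiency direction I would use the elementary truncation bound, valid at any level $a > 0$,
\[
\E[1 \wedge W_n] \le a\,\p(W_n \le a) + \p(W_n > a) \le a + \p(W_n > a),
\]
which follows by estimating $1 \wedge W_n \le W_n \le a$ on the event $\{W_n \le a\}$ and $1 \wedge W_n \le 1$ on $\{W_n > a\}$. Choosing $a = c_1 \varepsilon_n$ and inserting \eqref{eqn:assumption_rate_p} yields $\E[1 \wedge W_n] \le c_1 \varepsilon_n + c_2 \varepsilon_n = O(\varepsilon_n)$, which is precisely \eqref{eqn:assumption_rate}. Since \eqref{eqn:assumption_rate} is the hypothesis driving \cref{thm:rate of convergence}, this establishes that \eqref{eqn:assumption_rate_p} is sufficient.

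For non-necessity I would exhibit a law for $S_n = \sqrt{\frac{1}{n}\sum_{k=1}^n Y_k^2}$ that satisfies \eqref{eqn:assumption_rate} but violates \eqref{eqn:assumption_rate_p} for every choice of constants. The guiding idea is that $\E[1 \wedge W_n]$ stays of order $\varepsilon_n$ if $W_n$ takes a moderately small value on a set whose probability is far larger than $\varepsilon_n$. Concretely, I would arrange $W_n = \sqrt{\varepsilon_n}$ with probability $\sqrt{\varepsilon_n}$ and $W_n = 0$ otherwise. Then $\E[1 \wedge W_n] = \sqrt{\varepsilon_n}\cdot\sqrt{\varepsilon_n} = \varepsilon_n$, so \eqref{eqn:assumption_rate} holds; yet for large $n$ one has $\sqrt{\varepsilon_n} > c_1 \varepsilon_n$, so that $\p(W_n > c_1 \varepsilon_n) = \sqrt{\varepsilon_n}$, and this cannot be bounded by $c_2 \varepsilon_n$ for any fixed $c_2$ as $n \to \infty$. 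Hence \eqref{eqn:assumption_rate} is satisfied while \eqref{eqn:assumption_rate_p} fails.

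The main obstacle is not the analytic estimate, since the sufficiency direction is a one-line truncation argument, but rather making the counterexample rigorous at the level of the sequence $\{Y_k\}$: one must realize an actual family of (dependent) variables $Y_k$, with the bounded moments required by \cref{thm:rate of convergence}, whose normalized quadratic variation $S_n$ has the prescribed two-point behaviour (or any law with the same tail-versus-mean profile). This is routine — for instance by letting the value of $\frac{1}{n}\sum_k Y_k^2$ be governed by an auxiliary Bernoulli variable independent of the $X_k$ — but it is the only place where care is needed. It is worth emphasizing in closing that the genuine content is conceptual: the gap between the two conditions is exactly the gap between an $L^1$-type control of $W_n$ and a tail control concentrated at the single scale $\varepsilon_n$, and it is the latter's rigidity that makes \eqref{eqn:assumption_rate_p} strictly stronger.
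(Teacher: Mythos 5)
Your sufficiency half coincides with the paper's proof: both split $\E[1\wedge|S_n-1|]$ over the event $\{|S_n-1|>O(\varepsilon_n)\}$ and its complement, bounding the first piece by the tail probability and the second by $O(\varepsilon_n)$. The divergence is in the non-necessity half, and there your argument has a real gap precisely at the step you dismiss as routine. The random variables $S_n=\sqrt{\frac{1}{n}\sum_{k\le n}Y_k^2}$ for different $n$ are functionals of one and the same sequence $\{Y_k\}$, so any prescribed family of laws for $W_n=|S_n-1|$ must be jointly consistent with the pathwise constraint $Y_n^2=nS_n^2-(n-1)S_{n-1}^2\ge 0$. Your suggested realization, an auxiliary Bernoulli variable governing the value of $\frac{1}{n}\sum_k Y_k^2$, fails if the Bernoulli outcomes are taken independently across $n$: a path with $S_{n-1}=1+\sqrt{\varepsilon_{n-1}}$ and $S_n=1$ forces $(1+\sqrt{\varepsilon_{n-1}})^2\le n/(n-1)$, i.e.\ $\varepsilon_{n-1}=O(n^{-2})$, incompatible with any rate of interest such as $n^{-1/2}$; symmetrically, entering a downward-deviated state from $S_{n-1}=1$ forces $Y_n^2\approx 1-2n\sqrt{\varepsilon_n}<0$. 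The construction \emph{can} be repaired --- nest the exceptional events through a single uniform variable $U$, setting $A_n=\{U\le\sqrt{\varepsilon_n}\}$ (decreasing in $n$), and take the deviation downward, $S_n=1-\sqrt{\varepsilon_n}$ on $A_n$ and $S_n=1$ off it, so that paths only ever \emph{leave} the deviated state, all increments $Y_n^2$ stay nonnegative, and one can check $\E|Y_n|^3$ remains bounded --- but as written the existence of an admissible $\{Y_k\}$ is asserted, not proved, and it is the entire content of the non-necessity claim.

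For comparison, the paper avoids this issue altogether by using a concrete sequence rather than an abstract law: take $\{Y_i\}$ i.i.d.\ with mean zero and variance one. By Slutsky, $\sqrt{n}\left(S_n-1\right)\to\cN(0,\tfrac14)$, whence $\E[1\wedge|S_n-1|]=O(n^{-1/2})$, so \cref{eqn:assumption_rate} holds with $\varepsilon_n=n^{-1/2}$; yet $\p\left(|S_n-1|>O(n^{-1/2})\right)\approx\p\left(|\cN(0,\tfrac14)|>O(1)\right)=O(1)$, so \cref{eqn:assumption_rate_p} fails at that rate. This buys the same conclusion with all moment hypotheses verified for free, while your two-point profile, once properly realized, buys a sharper illustration of the mechanism (the full gap between $L^1$-control and single-scale tail control, with $\p(W_n>c\varepsilon_n)=\sqrt{\varepsilon_n}\gg\varepsilon_n$). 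Either patch your realization along the nested-event lines above, or simply substitute the i.i.d.\ example.
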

\begin{proof}
    Let $S_n = \sqrt{\frac{\sum Y_k^2}{n}}$. Assume \cref{eqn:assumption_rate_p} holds.
\begin{align*}
     \E \left[ 1 \wedge |S_n-1|  \right]
    & = \E \left[ 1 \wedge |S_n-1|, |S_n-1|>O(\varepsilon_n)  \right] 
       + \E \left[ 1 \wedge |S_n-1|, |S_n-1|\le O(\varepsilon_n)  \right] \\
    & \le  \p(|S_n-1|>O(\varepsilon_n) ) + O(\varepsilon_n) \\
    & \le O(\varepsilon_n)
\end{align*}

{
To show condition \cref{eqn:assumption_rate_p} is stronger than condition \cref{eqn:assumption_rate}, we look at the example of i.i.d. $\{Y_i\}$ sequence.

For i.i.d. $Y_i$ mean zero and variance one,  $$\sqrt{n}\left(\sqrt{\frac{\sum Y_i^2}{n}}-1 \right) = \sqrt{n}\frac{(\sum Y_i^2/n-1)}{(\sqrt{\sum Y_i^2/n}+1)} \to \cN(0,\frac{1}{4})$$ 
Since $\sqrt{n}(\sum Y_i^2/n-1) \to \cN(0,1)$ and $(\sqrt{\sum Y_i^2/n}+1) \to 2$ in probability and we can apply Slutsky's theorem. Therefore condition \cref{eqn:assumption_rate} is satisfied
\begin{align*}
    \E \left( 1 \wedge \left| \sqrt{\frac{\sum Y_i^2}{n}}-1 \right|  \right) 
    & = O(\frac{1}{\sqrt{n}}) 
\end{align*}
However, condition \cref{eqn:assumption_rate_p} is not  satisfied since 
\begin{align*}
    \p \left( \left|\sqrt{\frac{\sum Y_k^2}{n}}-1 \right| > O(\frac{1}{\sqrt{n}}) \right) 
    = \; &  \p \left(\sqrt{n} \left|\sqrt{\frac{\sum Y_k^2}{n}}-1 \right| > O(1) \right) \\
    \approx \; & \p \left(|\cN(0,\frac{1}{4})| > O(1) \right)\\
    = \; & O(1)
\end{align*}
}
\end{proof}


\section{Random projection}\label{sec:random projection}

Suppose we have  $X, Z$  two independent random vectors. In this section, we will investigate how much the independence structure is preserved in the projected space. Let $S$ be a random projection, the resulting projected random vectors $SX, SZ$ will be dependent. We will see the distribution of inner product is preserved under certain conditions.

Given two independent random vectors in $\R^n$: 
$$X=\mat[c]{X_1\\ \vdots\\ X_n}, Z=\mat[c]{Z_1\\ \vdots\\ Z_n}$$
with  i.i.d. entries and all $X_i, Z_i$ independent with each other. Let  $ \E X_i = \E Z_i = 0, \quad \E X_i^2 = \E Z_i^2 = 1, \quad \E |X_i|^3 \vee \E |Z_i|^3 < A <\infty $. Then it is clear the following CLT holds:
\[
\frac{1}{\sqrt{n}}X^T Z = \frac{1}{\sqrt{n}}\sum_{i=1}^n X_i Z_i \xrightarrow{\; d \;} \cN(0,1)
\]
And the classical  Berry-Esseen theorem \cite{berry1941accuracy, esseen1969remainder, durrett2019probability} tells us 
\begin{align}\label{eqn:Berry-Esseen classical}
    \sup_t \left| \p\left(\frac{1}{\sqrt{n}}X^T Z <t  \right) - \p\left(\cN(0,1) <t  \right) \right| \le O \left(\frac{1}{\sqrt{n}} \right)
\end{align}

Consider a  random  matrix $S: \R^n \to \R^m$ whose entries has mean 0 and variance 1. Then the natural question is whether CLT holds for product of the randomly projected vectors $SX$ and $SZ$. Namely 
\[
\frac{1}{\sqrt{n}\; a_{n,m}} X^T S^T S Z \xrightarrow{\; ? \;} \cN(0,1)
\]
where $a_{n,m}$ is a scaling parameter depend on both $m$ and $n$. Moreover, we will need to derive the rate of convergence in the spirit of Berry-Esseen theorem, namely find
\[
\sup_t \left| \p\left(\frac{1}{\sqrt{n}\; a_{n,m}} X^T S^T S Z <t  \right) - \p\left(\cN(0,1) <t  \right) \right| \le \; ? 
\]

If we try to use existing CLT that deals with dependent random variables, for example martingale CLT, it will not be applicable. The major difficulty is that there is no natural filtration since the terms in the sum will be very dependent so the conditional variance in martingale CLT is not computable. It turns out our product-CLT is the right tool to use. We decouple the dependence into the sequence of independent random variables $X$ and another sequence $S^TSZ$ with complicated dependence.


Now what are the necessary conditions required to apply our product-CLT?	Since $\{X_i\}$ is a sequence with independent random variables, it satisfies all conditions in \cref{thm:product_CLT} and \cref{thm:rate of convergence}. So we need to show the assumptions on the second dependent sequence 
	$$Y = \mat[c]{Y_1\\ \vdots \\ Y_n }:=\frac{1}{a_{n,m}} S^{T} SZ$$
	is also satisfied. Denote $i$-th column of $S$ as $S_i$, then $Y_i = \frac{1}{a_{n,m}} S_i^T SZ$. Moreover, $\{Y_i\}$ are identically distributed even though they are dependent random variables.
The Lindeberg swap idea in \cref{thm:product_CLT} requires the variables $ Y_i$ have finite third moments and a weak law of large number of $Y_i^2$. We shall prove the weak law of large number in the following \cref{lemma:random projection weak LLN Y^2}. In the proof we will follow \cref{prop:second moment condition} using Chebyshev's inequality to show the weak law of large number statement. To find the rate of convergence, we will need to compute the exact order of \cref{eqn:assumption_rate}.
	
{
    
	
}

\subsection{Random matrix preserves inner product}
	
	\begin{lemma}\label{lemma:random projection weak LLN Y^2}
		Given $m, n \to \infty$, \; $a_{n,m}=\sqrt{m^2+mn}$, and $\E S_{i,j}^4 \vee \E S_{i,j}^8 \vee \E z_i^4 < C <\infty$. If we let 
		\[
		y_i = \frac{1}{a_{n,m}} S_i^T SZ
		\]
		then we have 
		\[
		\E y_i^2 \to 1, \forall i
		\]
		and 
		\[
		\frac{1}{n} \sum_{i=1}^n y_i^2\to 1
		\]
	\end{lemma}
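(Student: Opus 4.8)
The plan is to verify the three hypotheses of \cref{prop:second moment condition} for the sequence $\{y_i\}$ and then invoke its Chebyshev argument to deduce the weak law of large numbers. Write $\mu_4 := \E S_{1,1}^4$ and expand each coordinate, separating the diagonal ($k=i$) contribution from the rest:
\[
W_i := S_i^T S Z = \sum_{a=1}^m \sum_{k=1}^n S_{a,i} S_{a,k} z_k = z_i \sum_{a=1}^m S_{a,i}^2 + \sum_{k \ne i} z_k \sum_{a=1}^m S_{a,i} S_{a,k} =: D_i + O_i ,
\]
so that $y_i = W_i/a_{n,m}$. Every expectation below reduces to expanding products of the independent mean-zero entries $S_{a,k}, z_k$ and keeping only the index patterns in which each distinct entry appears to an even power; all other patterns vanish.

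First I would establish $\E y_i^2 \to 1$. Since $D_i$ carries the single factor $z_i$ while $O_i$ carries only factors $z_k$ with $k\neq i$, integrating out $z_i$ kills the cross term, so $\E[D_i O_i]=0$. The surviving pieces are $\E D_i^2 = \E(\sum_a S_{a,i}^2)^2 = m\mu_4 + m(m-1)$ and $\E O_i^2 = \sum_{k\ne i}\E(\sum_a S_{a,i}S_{a,k})^2 = (n-1)m$, whence $\E W_i^2 = m^2 + mn + m(\mu_4-2)$ and $\E y_i^2 = 1 + \frac{\mu_4-2}{m+n}\to 1$. This gives the first claim and confirms that $a_{n,m}^2 = m^2+mn$ is the right normalization.

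For the law of large numbers I would check the two remaining hypotheses of \cref{prop:second moment condition}: a uniform fourth-moment bound $\E y_i^4 \le C'$ and the mixed-moment convergence $\frac{1}{n^2}\sum_{i\ne j}\E[y_i^2 y_j^2]\to 1$. For the fourth moment I would use $(D_i+O_i)^4 \le 8(D_i^4+O_i^4)$ and treat the two parts separately. Here $\E D_i^4 = \E z_i^4\,\E(\sum_a S_{a,i}^2)^4 = O(m^4)$, which is exactly where the assumptions $\E S_{i,j}^8<\infty$ and $\E z_i^4<\infty$ are used; expanding $\E O_i^4$ and pairing the $z$-indices (two distinct pairs dominate) gives $\E O_i^4 = O((mn)^2)$. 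Dividing by $a_{n,m}^4 = (m^2+mn)^2 \ge \max(m^4,(mn)^2)$ yields a bound on $\E y_i^4$ uniform in $m,n,i$.

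The main obstacle is the mixed second moment $\E[y_i^2 y_j^2]$ for $i\ne j$, where $W_i^2 W_j^2$ expands into a sum over eight row/column indices with four $z$-factors and eight $S$-factors, and one must enumerate the surviving pairings. I expect the leading contribution to be precisely $\E[W_i^2]\,\E[W_j^2]\sim a_{n,m}^4$, with the "crossing" pairings (those coupling the two coordinates through a shared row, a shared $z_k$, or a coincidence of column indices) contributing only corrections of relative size $O\big(1/(m\wedge n)\big)$; consequently $\E[y_i^2 y_j^2]=1+o(1)$ uniformly over $i\ne j$, so $\frac{1}{n^2}\sum_{i\ne j}\E[y_i^2 y_j^2]\to 1$. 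With the three facts in hand, the Chebyshev bound $\p(|\frac{1}{n}\sum_i y_i^2 - 1|>\varepsilon)\le \varepsilon^{-2}\,\var(\frac{1}{n}\sum_i y_i^2)\to 0$ of \cref{prop:second moment condition} closes the argument. The pairing bookkeeping in $\E O_i^4$ and especially in $\E[y_i^2 y_j^2]$ is the only genuinely laborious step; everything else is a short second-moment computation.
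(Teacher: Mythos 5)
Your proposal is correct and follows essentially the same route as the paper: both verify the hypotheses of \cref{prop:second moment condition} (namely $\E y_i^2 \to 1$, a uniform bound on $\E y_i^4$, and $\frac{1}{n^2}\sum_{i\ne j}\E [y_i^2 y_j^2]\to 1$) and close with Chebyshev, and your second-moment value $1+\frac{\mu_4-2}{m+n}$ matches the paper's computation exactly. The only substantive difference is presentational: your $D_i+O_i$ split neatly streamlines the second- and fourth-moment bounds (the paper instead counts surviving index patterns directly to get $\E y_1^4 \le 4 + O(\frac{1}{m}+\frac{1}{m+n})$), whereas the mixed moment $\E[y_i^2 y_j^2]$ --- which you correctly predict equals $1+O\bigl(\frac{1}{m\wedge n}\bigr)$ but leave as unexecuted bookkeeping --- is precisely where the paper invests the bulk of its proof, with an explicit case enumeration yielding $\E[y_1^2 y_2^2]=1+O\bigl(\frac{1}{m}+\frac{1}{m+n}\bigr)$, consistent with your prediction.
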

	\begin{proof}
		First, we note $y_i$ are identically distributed. By  \cref{prop:second moment condition}, it suffices to prove $\E y_i^2 \to 1$, $\E y_i^4<C <\infty$, and $\E y_i^2 y_j^2 \to 1$. 
		
		For the second moment,
		\begin{align*}
		\E [y_1^2]&{}=\frac{1}{a_{n,m}^2}\E [(S_1^T S Z)^2] \\
		&{}=\frac{1}{a_{n,m}^2 }\sum_{ 1\le i,j\le m, 1\le p, q \le n }\E  S_{i,1} S_{i,p} z_{p}  S_{j,1} S_{j,q} z_{q} \\
		&{}=\frac{1}{a_{n,m}^2 }\sum_{ 1\le i,j\le m, 1\le p, q \le n }\E  S_{i,1} S_{i,p}  S_{j,1} S_{j,q} \E z_{p} z_{q}
		\end{align*}	
		Notice the random matrix $S$ and random vector $Z$ are centered,  $\E S=0$, $\E Z=0$. The surviving terms have to be even powers, which are $\{p=q\ne 1,  i=j\}$, $\{p=q=1, i,j \}$. Therefore
		\begin{align}\label{eqn:Y_squared}
		\E [y_1^2] &{} 
		= \frac{1}{a_{n,m}^2} \left[ \sum_{ 1\le i\le m, 2\le p \le n }\E  S_{i,1}^2 S_{i,p}^2 \E z_{p}^2 + \sum_{ 1\le i, j\le m }\E  S_{i,1}^2 S_{j,1}^2 \E z_{1}^2 \right] \nonumber\\
		& = \frac{1}{(m^2+mn)} [m(n-1) + (m\E S_{1,1}^4 +m^2-m)] \nonumber\\
		& =  1 + \frac{\E S_{1,1}^4-2}{m+n} \nonumber\\
		&{} = 1 +O\left(\frac{1}{m+n}\right)
		\to  1 
		\end{align}

		Now we will show $\E y_i^2 y_j^2 \to 1$ for all $i\ne j$. 
		\begin{align*}
		\E [y_1^2 y_2^2]&{}=\frac{1}{a_{n,m}^4}\E [(S_1^T SZ)^2 (S_2^T SZ)^2] \\
		&{}=\frac{1}{a_{n,m}^4}\sum_{}\E  (S_{i_1,1} S_{i_1,p_1} S_{j_1,1} S_{j_1,q_1} z_{p_1} z_{q_1})\; (S_{i_2,2} S_{i_2,p_2} S_{j_2,2} S_{j_2,q_2} z_{p_2} z_{q_2}) 	
		\end{align*}
		First, there are eight indices in the summation. And $1\le i_1, i_2, j_1, j_2 \le m$,  $ 1\le p_1, q_1, p_2, q_2 \le n$.  Since 
		$\E S_{i,j}= 0, \E z_i = 0$, the surviving terms in the summation must have higher powers for $S_{i,j}$ and $z_i$. We will count the total number of possible such terms. 
		
		Surviving terms will satisfy the following condition
		$$z_{p_1} z_{q_1} z_{p_2} z_{q_2}= z_p^2 z_q^2, \quad 1\le p, q \le n$$ 
		
		We will analyze and count in two different cases:
		$$
		 \{p, q\} \cap \{1,2\} \ne \emptyset, \quad  \{p, q\} \cap \{1,2\} = \emptyset 
		$$
There are still many sub-cases, we need to treat differently. 		
\begin{itemize}
 \item Case 1: $\{p, q\} \cap \{1,2\} \ne \emptyset$
    \begin{itemize}
    \item Case 1-1: $\{p, q \}\subseteq \{ 1, 2\}$.
        \begin{itemize}
            \item Case 1-1-1: $p=q=1$. Then each term  is $\E S_{i_1,1}^2 S_{j_1,1}^2 S_{i_2,2} S_{i_2, 1} S_{j_2,2} S_{j_2, 1} z_1^4$. Then $i_2=j_2$ in order to have squares.  So the total is 
            \[
            m^3 \E z_1^4 + O(m^2)
            \]
            \item Case 1-1-2: $p=q=2$. Same as the computation in case 1-1-1, we have total
            \[
            m^3 \E z_1^4 + O(m^2)
            \]
            \item Case 1-1-3: $p=1,q=2$. This will give us  $\binom{4}{2}=6$ separate cases.
        \begin{center}
         \begin{tabular}{|c|c|c|c|} \hline
            $p_1$ & $q_1$ & $p_2$ & $q_2$ \\ \hline
            1 & 1 & 2 & 2 \\
            1 & 2 & 1 & 2 \\
            1 & 2 & 2 & 1 \\
            2 & 1 & 1 & 2 \\
            2 & 1 & 2 & 1 \\
            2 & 2 & 1 & 1 \\\hline
         \end{tabular} 
        \end{center}
            Only the first case (1, 1, 2, 2) produces terms $\E S_{i_1,1}^2 S_{j_1,1}^2 S_{i_2,2}^2 S_{j_2,2}^2  z_1^2 z_2^2$ . In total it is $m^4 +O(m^3)$.         
         All other five cases admit similar analysis with same number of terms, we only show the second case (1, 2, 1, 2), which is  
         
         $\E S_{i_1,1}^2 S_{j_1,1} S_{j_1,2} S_{i_2,2}S_{i_2,1} S_{j_2,2}^2  z_1^2 z_2^2$. Then $j_1 = i_2$ must hold for the surviving terms, which in total is $m^3 +O(m^2)$. Combining all together, we have in total
         \[
         m^4 +O(m^3)
         \]

        \end{itemize}
    
    \item Case 1-2: $p=1, q \notin  \{ 1, 2\}$. Same as 1-1 there are  $\binom{4}{2}=6$ separate cases.
        \begin{center}
         \begin{tabular}{|c|c|c|c|} \hline
            $p_1$ & $q_1$ & $p_2$ & $q_2$ \\ \hline
            1 & 1 & q & q \\
            1 & q & 1 & q \\
            1 & q & q & 1 \\
            q & 1 & 1 & q \\
            q & 1 & q & 1 \\
            q & q & 1 & 1 \\\hline
         \end{tabular} 
        \end{center}

           Only the first case (1, 1, q, q) produces terms $\E S_{i_1,1}^2 S_{j_1,1}^2 S_{i_2,2}S_{i_2,q} S_{j_2,2}S_{j_2,q}  z_1^2 z_q^2$. In this case $i_2 =j_2$ must hold. In total, there are $m^3(n-2) +O(m^2n)$ terms.          
           
         All other five cases have similar analysis with same number of terms, we only show the the second case (1, q, 1, q), $\E S_{i_1,1}^2 S_{j_1,1}S_{j_1,q} S_{i_2,2}S_{i_2,1} S_{j_2,2}S_{j_2,q}  z_1^2 z_q^2$. In this case $j_1= i_2 =j_2$ must hold. In total, there are $m^2(n-2) +O(mn)$ terms.  Combining all together, we have in total
         \[
         m^3n +O(m^3+m^2n)
         \]
    \item Case 1-3: $p=2, q \notin  \{ 1, 2\}$. Again there are    $\binom{4}{2}=6$ separate cases.
         \begin{center}
         \begin{tabular}{|c|c|c|c|} \hline
            $p_1$ & $q_1$ & $p_2$ & $q_2$ \\ \hline
            2 & 2 & q & q \\
            2 & q & 2 & q \\
            2 & q & q & 2 \\
            q & 2 & 2 & q \\
            q & 2 & q & 2 \\
            q & q & 2 & 2 \\\hline
         \end{tabular} 
        \end{center}
           Only the last case (q, q, 2, 2) produces terms $\E S_{i_1,1} S_{i_1,q} S_{j_1,1}S_{j_1,q} S_{i_2,2}^2 S_{j_2,2}^2  z_q^2 z_2^2$. Then $i_1=j_1$ must hold. In total it is $m^3(n-2) +O(m^2n)$.    
           
         All other five cases have similar analysis, we only show the first (2, 2, q, q)here. \\
         $\E S_{i_1,1}S_{i_1,2} S_{j_1,1} S_{j_1,2}$ $  S_{i_2,2} S_{i_2,q} S_{j_2,2}S_{j_2,q}  z_q^2 z_2^2$. Then $i_1=j_1,i_2 = j_2$ must hold for the surviving terms, which in total is $m^2(n-2) +O(mn)$. Combining all together, we have in total
         \[
         m^3n +O(m^3+m^2n)
         \]
    \end{itemize} 
 \item Case 2: $\{p, q\} \cap \{1,2\} = \emptyset$. \\
 To have squares for variables from matrix $S$, we must have squares produced for $S_{i_1,1} S_{j_1,1} S_{i_2,2} S_{j_2,2}$ and $S_{i_1,p_1} S_{j_1,q_1} S_{i_2,p_2} S_{j_2,q_2}$ separately. Therefore $i_1=j_1, i_2=j_2$. Denote $i_1:= i, j_1:= j$. Then we can further split into two cases, $i=j$ and $i\ne j$.
   \begin{itemize}
       \item Case 2-1: $\{p, q\} \cap \{1,2\} = \emptyset$ and $i =j $. Then each term involving $S$ is $\E S_{i,1}^2 S_{i,2}^2 S_{i,p_1} S_{i, p_2} S_{i,q_1} S_{i, q_2}$. This  will produce 3 possible matches for $\{ p_1, p_2, q_1, q_2\}= \{p, q\}$.  which counting all indices will yields total $3m(n-2)^2$ terms. Some of those terms will have $p=q$, which will produce $m(n-2) \E S_{1,1}^4 \E z_1^4$ which is of a smaller order. So total will be 
       \[
       3m n^2 +O(mn)
       \]
       \item Case 2-2: $\{p, q\} \cap \{1,2\} = \emptyset$ and $i \ne j $. In this case $\{p_1=q_1, p_2=q_2$ must be true. That in total will produce $(m^2-m)[(n-2)^2-(n-2)]$ terms which we excluded the cases when $p=q$. Then the cases of $p=q$  in total are  $(m^2-m)(n-2)$ of $\E z_1^4$. In total
       \begin{align*}
        & (m^2-m)[(n-2)^2-(n-2)] + (m^2-m)(n-2) \E z_1^4 \\
        = & m^2n^2 - mn^2 +(\E z_1^4 -5) m^2n + O(mn + m^2)    
       \end{align*}
   \end{itemize}
\end{itemize}

Adding all the cases together we obtain 
\begin{align}
     \E [y_1^2 y_2^2] = & \frac{1}{(m^2+mn)^2} [m^4+ 2m^3n + m^2n^2 +O(m^3 +m^2n +mn^2) ] \\
     = & 1+ \frac{O(m^3 +m^2n +mn^2) }{(m^2+mn)^2} \\
     = & 1+ O\left(\frac{1}{m}+ \frac{1}{m+n} \right) \to 1
\end{align}
Therefore,
\[
\frac{1}{n^2} \sum_{i\ne j} \E [(y_i^2-1) (y_j^2-1)] = \frac{n^2-n}{n^2}\E [y_1^2 y_2^2 - y_1^2 - y_2^2 +1] \to 0 
\]
		
		For the fourth moment, we will show $\E y_i^4 \le  C<\infty$.
		\begin{align*}
		\E [y_1^4]&{}=\frac{1}{a_{n,m}^4}\E [(S_1^T SZ)^4] \\
		&{}=\frac{1}{a_{n,m}^4}\sum_{}\E  S_{i_1,1} S_{i_1,p_1} S_{j_1,1} S_{j_1,q_1} z_{p_1} z_{q_1} \\
		&\hspace{6em} S_{i_2,1} S_{i_2,p_2} S_{j_2,1} S_{j_2,q_2} z_{p_2} z_{q_2}
		\end{align*}
		Similarly, the surviving terms are $\{p_1=q_1, p_2=q_2, i_1=j_1, i_2=j_2\}$, $\{p_1=p_2, q_1=q_2, i_1=i_2, j_1=j_2\}$  and $\{p_1=q_2, q_1=p_2, i_1=j_2, j_1=i_2\}$ which in total will  give $3m^2n^2+  m^4 +  6m^3n +O(m^3+ m^2n+mn^2)$ where $m^4$ comes from counting terms of the form $\{ p_1=q_1= p_2=q_2=1 \}$, and $m^3n$ comes from 
		\[
		\{ p_1,q_1, p_2,q_2\}=\{1,q\}
		\]
		Therefore 
		\[
		\E y_i^4  = 3 \frac{n}{m+n}+ \frac{m^2}{(m+n)^2} + \frac{O(m^3 +m^2n +mn^2) }{(m^2+mn)^2} \le  4+ O\left(\frac{1}{m}+ \frac{1}{m+n} \right) 
		\]

		Lastly we shall apply Chebyshev's inequality.
		\begin{align*}
		\p\left(|\frac{1}{n} \sum y_i^2 -1| >\varepsilon \right) 
		&\le \frac{1}{n^2 \varepsilon^2} \left[ \sum \E (y_i^2-1)^2 +\sum_{i\neq j} \E (y_i^2-1)(y_j^2-1)  \right]  \to 0
		\end{align*}
	
	\end{proof}

\begin{proof}\textbf{(\cref{thm:rand_proj CLT} Random matrix inner product CLT)} \\
    
Combing \cref{lemma:random projection weak LLN Y^2} with the product-CLT  \cref{thm:product_CLT}, we conclude random projection preserves product-independence, namely given conditions in \textbf{\cref{thm:rand_proj CLT} } we have
\[
\frac{1}{\sqrt{m^2n+mn^2}} X^TS^TSZ \to \cN(0,1)  \qquad \text{as } m,n \to \infty
\]

\end{proof}	
\bigskip

Now we shall discuss the rate of convergence. Obtaining the exact rate is usually very hard since one has to compute the exact rate of convergence for the law of large number statement on $Y_k^2$ (namely condition \cref{eqn:assumption_rate}), 
which is not practically computable if no further information given. 
However it is possible to carry out an argument (for example using relaxations or \cref{prop:assumption_rate_tail_prob})  to obtain an upper bound. 

\bigskip

\begin{proof}\textbf{(\cref{thm:rand_proj rate of CLT} Random matrix invariance principle)} \\
    We will start with relaxation.  Since $\left| \sqrt{\frac{\sum Y_k^2}{n}}-1 \right| \le \left| \frac{\sum Y_k^2}{n}-1 \right|$
    \begin{align*}
    \E \left( 1 \wedge \left| \sqrt{\frac{\sum Y_k^2}{n}}-1 \right|  \right)  
    & \le  \E \left( 1 \wedge \left| \frac{\sum Y_k^2}{n}-1 \right|  \right)  \\
    & \le  \E \left[ \left| \frac{\sum Y_k^2}{n}-1 \right|  \right]  \\
    & \le \sqrt{\E \left[ \left( \frac{\sum Y_k^2}{n}-1 \right)^2  \right] }
    \end{align*}
The last step uses Jensen's inequality and $f(x)= \sqrt{x}$ is concave. Notice,
\begin{align*}
    \E \left[ \left( \frac{\sum Y_k^2}{n}-1 \right)^2  \right] 
    & = \frac{1}{n^2} \left[ \sum_{k=1}^n \E (Y_k^2 -1)^2  + \sum_{i\ne j}^n \E (Y_i^2 -1)(Y_j^2-1) \right] \\ 
    & =\frac{1}{n}\E (Y_1^2 -1)^2 +\frac{n^2-n}{n^2}  \E (Y_1^2 -1)(Y_2^2-1) 
\end{align*}

Therefore computing a bound for the rate of convergence boils down to compute the order of $\E (Y_1^2 -1)^2$ and $\E (Y_1^2 -1)(Y_2^2-1)$ explicitly which have already been computed in the proof of \cref{lemma:random projection weak LLN Y^2}.
\begin{align*}
		\E [Y_2^2] = \E [Y_1^2] 
		&{} = 1+  O\left(\frac{1}{m+n}\right) \\
		\E [Y_1^4] & \le 4 + O\left(\frac{1}{m}+ \frac{1}{m+n}\right) \le 4 + O\left(\frac{1}{m}\right) \\
		 \E Y_1^2 Y_2^2 
		 & = 1 +  O\left(\frac{1}{m}+ \frac{1}{m+n}\right) = 1 +  O\left(\frac{1}{m}\right) 
\end{align*}

This implies 
\[
\E (Y_1^2 -1)^2 = O\left(\frac{1}{m}\right) ,\quad \E (Y_1^2 -1)(Y_2^2-1) = O\left(\frac{1}{m} \right) 
\]
and 
So we conclude 
\begin{align*}
    \E \left( 1 \wedge \left| \sqrt{\frac{\sum Y_k^2}{n}}-1 \right|  \right)  
    & \le \sqrt{ \E \left[ \left( \frac{\sum Y_k^2}{n}-1 \right)^2  \right] }\\
    & = O\left( \frac{1}{\sqrt{m}} \right) 
\end{align*}
Applying \cref{thm:rate of convergence}, we conclude \cref{eqn:rand_proj rate to normal}. Then combining Berry-Esseen inequality \cref{eqn:Berry-Esseen classical} and triangle inequality, we obtain \cref{eqn:rand_proj rate to original vector}.
\end{proof}

 \bigskip


\subsection{Simulation}
We first give some simulations to show the random embedded or projected inner product converges to normal distribution.
\begin{figure}[H]
	\includegraphics[width=0.95\linewidth]{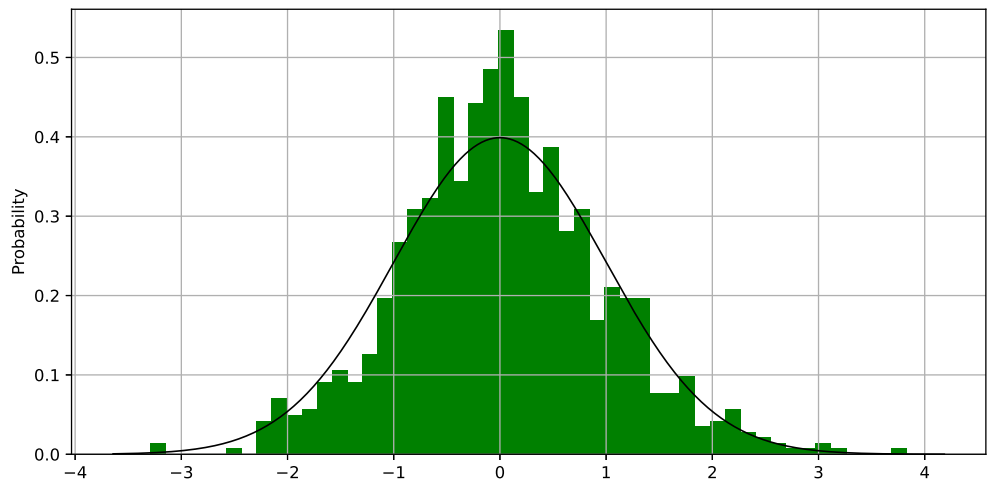}
	\caption{Random projected inner product (m=10, n=100)}
	\label{fig: inner_1 Rand Proj}
\end{figure}
\begin{figure}[H]
    \includegraphics[width=0.95\linewidth]{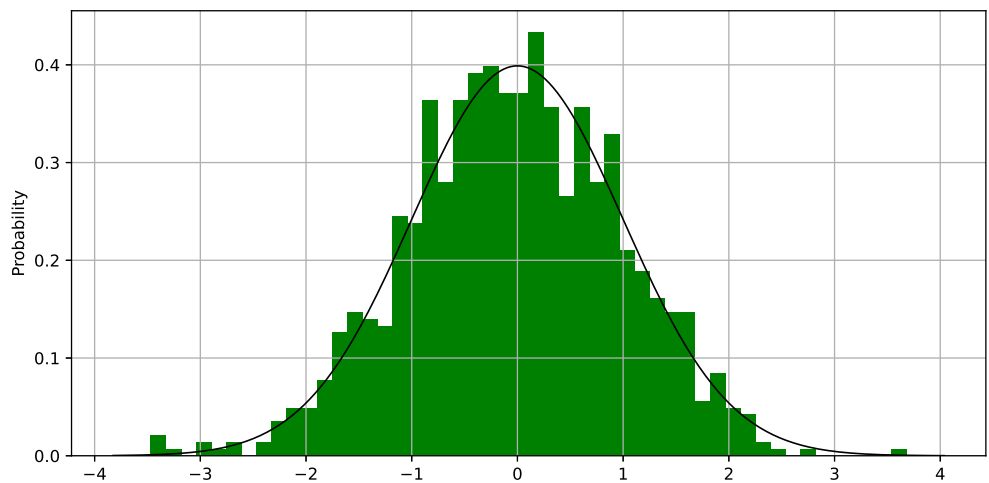}
    \caption{Random projected inner product (m=500, n=5000)}
    \label{fig: inner_2 Rand Proj}
\end{figure}

We have \cref{fig: inner_1 Rand Proj} and \cref{fig: inner_2 Rand Proj} plotted histograms of 1000 samples of the projected inner product $\frac{1}{\sqrt{m^2n+mn^2}} X^TS^TSZ$ with different dimension settings. The random variables we used for $X, S, Z$ are standard normal random variables. As dimension $m, n$ increases, the convergence improves.

Next we give simulations for random embedded inner product where $m> n$.
\begin{figure}[H]
	\includegraphics[width=0.95\linewidth]{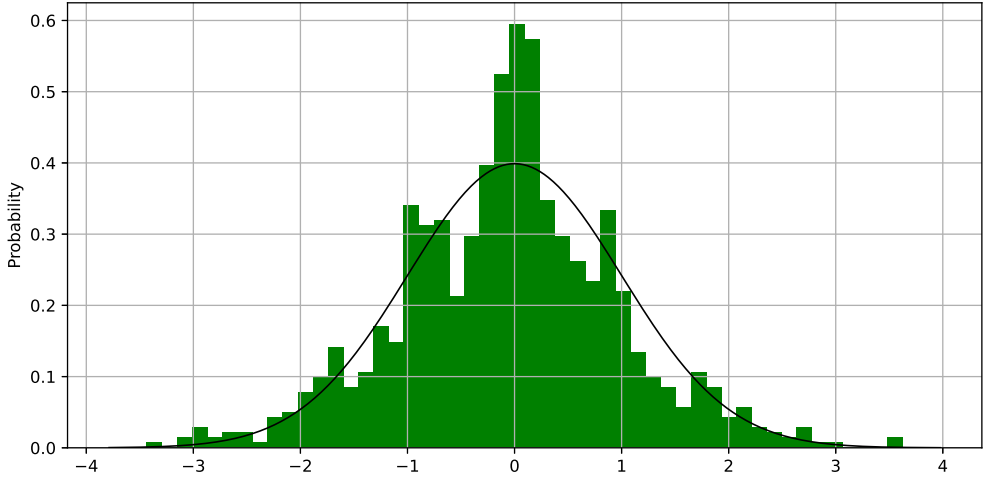}
	\caption{Random embedded inner product (m=500, n=50)}
	\label{fig: inner_1E Rand Proj}
\end{figure}
\begin{figure}[H]
    \includegraphics[width=0.95\linewidth]{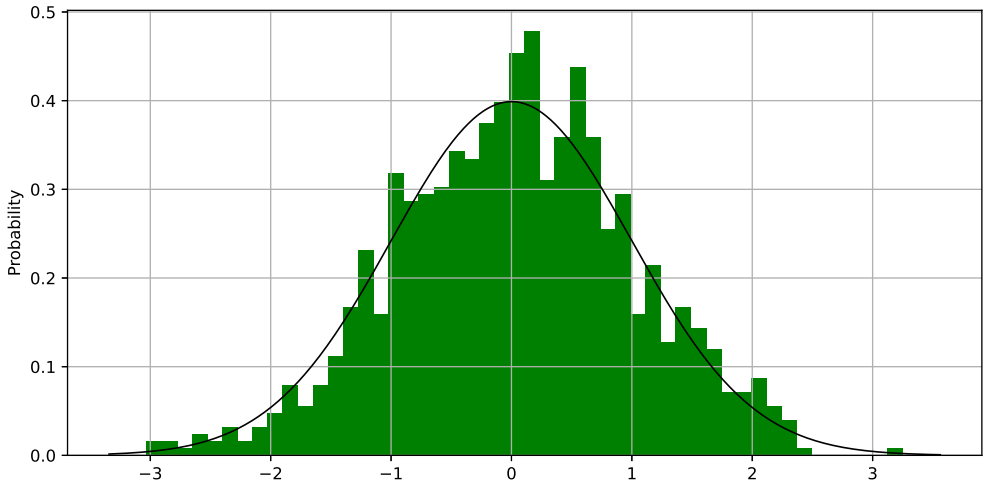}
    \caption{Random embedded inner product (m=5000, n=500)}
    \label{fig: inner_2E Rand Proj}
\end{figure}

Again \cref{fig: inner_1E Rand Proj} and \cref{fig: inner_2E Rand Proj} 
plotted histograms of 1000 samples of the embedded inner product $\frac{1}{\sqrt{m^2n+mn^2}} X^TS^TSZ$ . The random variables we used for $X, S, Z$  take discrete values $\{-2.5, 0, 2.5\}$ with probability $\{0.08, 0.84, 0.08\}$. The kurtosis of such random variable is $6.25$ which is much larger than standard normal random variable. Again as dimensions $m, n$ increase, the histogram converges to a standard normal shape.

\subsection{Discussion and open questions}
To have CLT result in \cref{thm:rand_proj CLT}, it is essential the dimension of the projected space $m$ diverges. Fixed $m$ will not lead to a CLT. 

For example, we let $m=1, n \to \infty$.  Then let $X \in \R^{n}$ be Gaussian vector, $Z\in \R^{n}$ be Rademacher vector and let $S: \R^n \to \R$ has  Rademacher entries. Suppose all random variables are independent, then 
$$\frac{1}{\sqrt{n}} X^TZ = \cN(0,1)$$
which holds exactly without error. On the other hand 
$$\frac{1}{\sqrt{1^2n+1n^2}} X^TS^TSZ \; \sim \; \cN(0,1) \times \cN'(0,1) +O(\frac{1}{\sqrt{n}})$$
that is the product of two independent standard Gaussian random variable. To see this is the case, note first $\frac{1}{\sqrt{n}} X^T S^T$ is exactly standard Gaussian $\cN(0,1)$. $\frac{1}{\sqrt{n+1}} S Z$ converges to another $\cN'(0,1)$ with error $O(\frac{1}{\sqrt{n}})$. The independence is due to the fact that Rademacher in $S$ can be absorbed into $X$ and $Z$ so that we may replace all entries of $S$ by constant $1$'s. Therefore the cdf of $\frac{1}{\sqrt{n}} X^TZ$ and $\frac{1}{\sqrt{1^2n+1n^2}} X^TS^TSZ$ differ by $O(1)=O(\frac{1}{\sqrt{m}})$.

The bound \cref{eqn:rand_proj rate to normal} in general can not be improved if there is no additional assumption.  $O(\frac{1}{\sqrt{n}})$ is necessary as it is in Berry-Esseen. $O(\frac{1}{\sqrt{m}})$ is also very likely to be necessary as the above example achieves the error rate when $m=1$.  For general $m$ we do not pursue a precise proof here but we give some heuristics.
Let $X \in \R^{n}$ be standard Gaussian vector, $Z\in \R^{n}$ be standard Rademacher vector and let $S: \R^n \to \R^m$ has  Rademacher entries as well. Suppose all random variables are independent. Denote 
$Y = \frac{1}{\sqrt{m^2+mn}} S^TSZ$. Notice $\frac{1}{\sqrt{n}} X^TZ$ is a standard Gaussian variable.
By the proof in  \cref{lemma:part_bound_2} and  \cref{prop:assumption_sharp_lowerbound}, we have the lower bound.
\[
\sup_t \left| \p\left(\frac{\sum x_iy_i }{\sqrt{n}}\le t\right)-\p(\frac{1}{\sqrt{n}} X^TZ\le t) \right|\ge
 O(\E \left[ 1 \wedge \left|\sqrt{\frac{\sum y_k^2}{n}}-1 \right| \right] )
\]

Now it is very likely
$
\E \left[ 1 \wedge \left|\sqrt{\frac{\sum y_k^2}{n}}-1 \right| \right]  = 1 +  O\left( \frac{1}{\sqrt{m}} \right)
$
since $\E \left[  \left(\frac{\sum {y_k}^2}{n}-1 \right)^2 \right] = O\left( \frac{1}{m} \right)$. Therefore a lower bound of $O(\frac{1}{\sqrt{m}})$ is obtained.

\smallskip
On the other hand, it is not clear whether \cref{eqn:rand_proj rate to original vector}  can be improved. In some cases, $O(\frac{1}{\sqrt{n}})$ is not necessary. For example, if we let $m\to \infty, n=1$, then
$$\frac{1}{\sqrt{m^2 1+m1^2}} X^TS^TSZ  \approx \left(\frac{1}{m} \sum_{i=1}^m S_i^2\right)XZ \to XZ$$

In the original Johnson-Lindenstrauss lemma, the number of vectors $p$ can be arbitrary ($p\ge 2$) and the error has a factor $\log p$. So far, we only discussed the case $p=2$. Moreover, we only discussed invariance of independence for random projection. To stretch the understanding to another level, we need to characterize invariance of dependent random vectors. A special case one can consider is when $X=Z$, so that we will have a quadratic form $X^TS^TSX$, which will be addressed in another future work.



\bibliography{Dependent_CLT}
\bibliographystyle{plain}

\end{document}